\documentclass{aims} 
\usepackage{amsmath}
\usepackage{paralist}
\usepackage[misc]{ifsym}
\usepackage{epsfig} 
\usepackage{epstopdf} 
\usepackage[colorlinks=true]{hyperref}
\hypersetup{urlcolor=blue, citecolor=red}
\usepackage{graphicx}
\usepackage{subcaption}
\usepackage{caption}
\usepackage{mathrsfs}
\usepackage{enumerate}
\usepackage{geometry}
\allowdisplaybreaks

\def\currentvolume{X}
 \def\currentissue{X}
  \def\currentyear{200X}
   \def\currentmonth{XX}
    \def\ppages{X-XX}
     \def\DOI{10.3934/xx.xxxxxxx}

\newtheorem{theorem}{Theorem}[section]

\newtheorem{lemma}[theorem]{Lemma}
\newtheorem{proposition}[theorem]{Proposition}

\theoremstyle{definition}

\title[Dynamical System for Neural Network Activity in Drosophila]
{A dynamical system approach to modeling neural network activity in Drosophila orientation} 

\author[S. Ismail, B. Ambrosio, M.A. Aziz-Alaoui and Y. Souleiman]{}

\subjclass{Primary: Primary 92B20; Secondary 34F05}
\keywords{Dynamical Systems, Complex Systems, Networks, Neuroscience, Drosophila Orientation}

\thanks{The corresponding author gratefully acknowledges L. Abbot for inspiring discussions that motivated this work.}

\thanks{$^*$Corresponding author: Benjamin Ambrosio}

\begin{document}
\maketitle

\centerline{\scshape
S. Ismail$^{{\href{mailto:said.ismail-abdi@etu.univ-lehavre.fr }{\textrm{\Letter}}}1,2}$, B. Ambrosio$^{{\href{mailto:benjamin.ambrosio@univ-lehavre.fr}{\textrm{\Letter}}}*1,3}$,
M.A. Aziz-Alaoui$^{{\href{mailto:aziz.alaoui@univ-lehavre.fr }{\textrm{\Letter}}}1}$ and Y. Souleiman$^{{\href{mailto:yahyeh_souleiman_isman@univ.edu.dj  }{\textrm{\Letter}}}2}$
}

\medskip

{\footnotesize
 \centerline{$^1$Normandie Univ., LMAH UR 3821, 25 rue Philippe Lebon, Le Havre, 76600, Normandie, France}
} 

\medskip

{\footnotesize
 \centerline{$^2$Laboratory of Analysis, Modeling, and Optimization (LAMO), CRMN Center, University of Djibouti, Campus Balbala, Djibouti}
}
{\footnotesize
 \centerline{$^3$The Hudson School of Mathematics, 244 Fifth Avenue, New York, 10001, New York, USA}
} 
\bigskip

 \centerline{(Communicated by Handling Editor)}


\begin{abstract}
We introduce and analyze a class of neural network models motivated by the Drosophila central complex nervous system, designed to capture the emergence and dynamics of orientation-selective activity bumps. Starting from a biologically inspired ring-connectivity model, we derive a simplified reduced model of recurrent neural activity that supports stable, localized patterns encoding angular position during the fly’s flight orientation.
We first study the deterministic dynamics and identify parameter regimes ensuring existence and global stability of bump solutions. We then extend the framework to a stochastic setting, incorporating both additive Brownian noise and a Markovian switching mechanism representing time-varying external cues. The resulting system is a switching diffusion with piecewise linear drift, for which we establish well-posedness, characterize the infinitesimal generator, and prove the existence of an invariant measure. Numerical simulations in low and high dimensions illustrate the robustness of the bump attractor under noise and switching stimuli, as well as the convergence toward the predicted stationary states. These results provide a mathematically tractable framework for understanding how population activity in the insect central complex encodes heading direction, with the location of the population-activity pattern representing the animal's current heading in response to external stimuli, while stochastic variability influences the precision and stability of this directional representation.
\end{abstract}


\section{Introduction}

This article presents a mathematical description and analysis of neuronal activity in the Drosophila ring network. Over the past two decades, experimental studies have established the central role of the Drosophila central complex in orientation and navigation during both walking and flight. The central complex comprises the ellipsoid body, the protocerebral bridge, the fan-shaped body, and the noduli.

Individual EPG (E for Ellipsoid Body, P for Protocerebral Bridge, and G for Gall or Glomerulus) neurons exhibit mixed input–output (dendritic) terminals within a single wedge of the ellipsoid body and axonal terminals in a single glomerulus of the protocerebral bridge. During locomotion, the population activity of EPG neurons forms a localized calcium bump of neuronal activity in the ellipsoid body, with corresponding representations in the left and right protocerebral bridge. These activity patterns shift coherently across structures, encoding the fly’s angular heading relative to external reference cues (Fig. \ref{fig:centralcomplex}). Key advances were as follows.

The ellipsoid body was first identified as a neural compass encoding heading direction through a ring-shaped activity pattern characterized by a stable, localized bump whose dynamics integrate angular velocity and align with visual landmarks \cite{Seelig2015}. Green \textit{et. al.} \cite{Green2017}, subsequently elucidated the circuit architecture underlying angular path integration in this system, including the protocerebral bridge, thereby providing experimental validation of theoretical models of heading representation. More recently, Cheng \textit{et. al.} (\cite{Cheng2021}) demonstrated that the central complex performs vector arithmetic by representing heading and movement as sinusoidal population activity in the fan-shaped body, enabling accurate spatial updating when traveling and heading directions diverge, such as during sideways walking.
In the present article, we focus on the mathematical properties of a model that provides an efficient and tractable description of calcium-activity bumps in central complex neuronal populations and which can be directly associated with the two-dimensional orientation of the fly. We derive and subsequently provide a rigorous analysis of a simplified version of the ring model described in \cite{Abbot2021,Cheng2021,kim2017}, originally introduced in \cite{Ben1995,GOLD2004} in the context of neuronal activity in the visual cortex.

\begin{figure}
    \centering
    \includegraphics[width=0.5\linewidth]{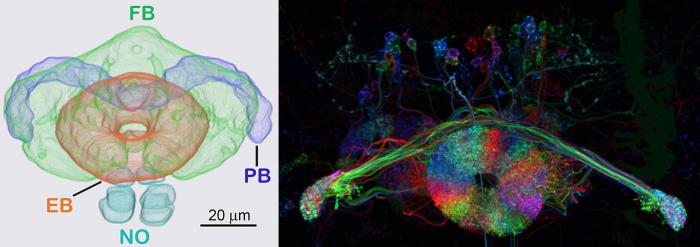}
    \includegraphics[width=3cm]{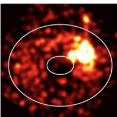}
    \caption{Structural, neuronal and functional  organization of the central complex. (Left) Schematic reconstruction of major neuropils, including the forebrain (FB), protocerebral bridge (PB), ellipsoid body (EB), and noduli (NO). Scale bar: 20 µm. (Center) Confocal image of multicolor-labeled neurons showing projection patterns and connectivity across central complex regions. (Right) Snapshots of compass-neuron population. Credit: Janelia Research Campus (HHMI); central complex anatomy adapted after  \cite{Kim2019Heading} and \cite{Wolff2018CentralComplex}, CC BY 4.0. }
    \label{fig:centralcomplex}
\end{figure}

We consider the following equation:

\begin{equation}
\label{eq:1}
\tau \dfrac{dr_i}{dt}=-r_i+F\left(\sum_{j=0}^{N-1}A_{ij}r_j+V_i\right)=-r_i+F(x_i),
\end{equation}

with $$ \ A_{ij}= J_0+J_i \cos(\theta_i-\theta_j)+L\cos\left(\theta_i-\theta_j+\frac{\pi}{4}\right)+R\cos\left(\theta_i-\theta_j-\frac{\pi}{4}\right), \ i\in \{0,...,N-1\}.$$

The different terms of the equation can be characterized as follows:
\begin{itemize}
\item[-]  $r_i=r_i(t)$ is the calcium concentration at angle $\theta_i$, with $\theta_i=i\dfrac{2\pi}{N}$.
\item[-]   $\tau$ is the leakage time scale.
\item[-]  $A$ is a square matrix representing synaptic interaction strength, with $A_{ij}$ representing the strength of the connection from angle $\theta_j$ (or $\theta_j=j2\pi/N$) to angle $\theta_i$.
\item[-]  $V_i=V_i(t)$ is an external input at angle $\theta_i$, which may or may not vary over time.
\item[-]   $x_i=x_i(t)= \sum_{j=0}^{N-1} A_{ij}r_j(t)+V_i(t)$ is the total input at angle $\theta_i$ over time.
\item[-]   $F: \mathbb{R}\rightarrow\mathbb{R}$ is a piecewise continuous linear function defined by:
\end{itemize}

\begin{equation}
F(x) =
\begin{cases}
x & \text{if } x\geq 0, \\
0 & \text{otherwise}.
\end{cases}
\label{eq:A_def}
\end{equation}

In the context of computational neuroscience, it is useful to view model \eqref{eq:1} as a coarse-grained firing-rate representation inspired by the leaky integrate-and-fire (LIF) model \cite{gerstner2014spiking,dayanabbott2001,izhikevich2007,lapicque1907}. We briefly recall here that the leaky integrate-and-fire (LIF) model is a foundational simplification in which a neuron’s membrane potential integrates incoming synaptic input while simultaneously decaying (leaking) back toward a resting state, producing a spike whenever a fixed threshold is crossed. Compared to the benchmark Hodgkin-Huxley model \cite{hh1952}, this abstraction captures key features of neuronal spiking dynamics while remaining mathematically and computationally tractable, making it widely used in large-scale neural simulations, such as for example in models of the visual cortex v1 \cite{csy2016,csy2018}. Here, in \eqref{eq:1}, the neuron receives excitatory drive from the network when the sum inside 
$F$ is positive, while simultaneously undergoing passive decay toward its resting state. In this representation, model \eqref{eq:1} should be regarded as a coarse-grained, rate-based analogue of LIF dynamics rather than as an explicit membrane-potential model. The membrane voltage is not included as a separate state variable; instead, its integration of synaptic inputs and the resulting spike generation are represented implicitly by the total drive 
\[ x_i(t)=\sum_{j=0}^{N-1}A_{ij}r_j(t)+V_i(t) \]
and the threshold-linear transfer function $(F)$. When $x_i>0$, the network input produces neuronal activation and hence an increase in the calcium-related variable $r_i$, which serves as a proxy for the underlying electrical and spiking activity. At the same time, the term $-r_i$ describes the passive relaxation of this activity on the timescale $\tau$. Thus, although $r_i$ is not the membrane voltage, its dynamics reflect the balance between input-driven neuronal activation and leakage that characterizes the LIF framework. Another choice, not considered here would be a $tanh$ type function for $F$. In order to proceed with the mathematical analysis, we seek to simplify equation \eqref{eq:1} while preserving its ability to produce a bump and to represent neuronal activity during the fly’s flight. Therefore, we assume that
$J_0=L=R=0,$
$\tau=1,\ 0<J_i=\delta>0,$
so the connectivity matrix $A$ is therefore reduced to $A=A_{ij}=\delta \cos(\theta_i-\theta_j).$
The network model then becomes:
\begin{equation}
\label{eq:linReLu}
\dot{r}(t) = -r(t) + F\!\big( A r(t) + V \big),
\end{equation}

where
\[
A_{ij} = \delta \cos(\theta_i - \theta_j), \qquad i,j \in \{0,\ldots,N-1\},
\]
and
\[
r(t)=\begin{pmatrix}
r_{0}(t)\\
r_{1}(t)\\
\vdots\\
r_{N-1}(t)
\end{pmatrix},
\qquad
V=\begin{pmatrix}
V_{0}\\
V_{1}\\
\vdots\\
V_{N-1}
\end{pmatrix}.
\]

Equation \eqref{eq:linReLu} is the main equation studied in this article. It describes a network of ordinary differential equations (ODEs), where the number of units $N$ typically ranges from a few dozen \cite{Ishida2026}.

We now introduce a natural stochastic extension of this model. Since it is reasonable to assume that the fly follows a cue while experiencing fluctuations in its motion, we add a Brownian noise term to each component $r_i$. We further assume that $V_i=0$ everywhere except at a single location representing the cue. Finally, we let the cue evolve according to a continuous-time Markov chain $I_t$.

The resulting stochastic differential equation is given by
\begin{equation}
\label{eq:sdeN}
dr_{i}(t)
=
\Bigg(
-r_{i}(t)
+
F\Big(
\sum_{j=0}^{N-1} A_{ij} r_{j}(t)
+
V_i(I_t)
\Big)
\Bigg)\,dt
+
\sigma_i\, dB_i(t),
\end{equation}
where $I_t$ is a continuous-time Markov chain on $\{1,\ldots,N\}$ with rate $\lambda>0$. The input vector $V(I_t)$ is defined such that $V_j = c > 0$ if $j = I_t$, and $V_j = 0$ otherwise.
The remainder of the article is organized as follows. In Section 2, we present the mathematical proofs. We first consider the case $N=2$, and then extend the analysis to the general case. Section 3 is devoted to numerical simulations.
\section{Qualitative Analysis}

In this section, we first study the case $N=2$ and then treat the general case.

\subsection{Qualitative Analysis for the case $N=2$}

\subsubsection{Deterministic equation ($N=2$)}

We begin our mathematical analysis with the case $N=2$, for which a more detailed treatment is possible. This simplified setting already captures key features of the system and helps motivate the choice of parameters. Moreover, we restrict our attention to biologically relevant regimes by assuming $V_1>0$, $V_2=0$. Under these assumptions, the deterministic system takes the form of
\begin{equation}
\label{eq:N=2}
\left\{
\begin{array}{rcl}
r_1' &=& -r_1 + F\big(\delta(r_1 - r_2) + V_1\big), \\
r_2' &=& -r_2 + F\big(\delta(r_2 - r_1)\big),
\end{array}
\right.
\qquad
F(x) = \max(x,0),
\end{equation}
where $\delta>0$, $V_1>0$.
Thus, system \eqref{eq:N=2} is the (N=2) reduction of model \eqref{eq:1}. The two units represent two neurons or neuronal populations tuned to opposite angular directions. The recurrent inputs therefore take the forms $\delta(r_1-r_2)$ and $\delta(r_2-r_1)$. Here, $\delta>0$ is the coupling strength and describes a competition between the two populations: the activity of each population reinforces its own response and suppresses the response associated with the opposite direction. Moreover, $V_1>0$ represents an external directional cue acting on the first population, whereas $V_2=0$ means that no direct cue is applied to the second population. The rectification $F(x)=\max(x,0)$ ensures that only positive total inputs contribute to neuronal activation, while the terms $-r_i$ describe passive decay of the calcium-related activity. This system models a biological situation in which the fly is exposed to an external stimulus biased toward a preferred direction. This system models a biological situation in which the fly is exposed to an external stimulus biased toward a preferred direction. As stated in the theorem below, the parameter regime $0<\delta<1$ ensures that the coupled system has a unique globally asymptotically stable equilibrium in the positive quadrant, which means that the the population receiving the external cue is selected without the appearance of additional competing equilibria. We therefore focus on $0<\delta<1$ as the biologically relevant weak-coupling regime. The following theorem holds.

\begin{theorem}
Assume that \(V_1>0\) and \(0<\delta<1\). Then equation
\eqref{eq:N=2} admits a unique equilibrium in the non-negative
quadrant, namely
\[
E_W
=
\left(
\frac{V_1}{1-\delta},
\,0
\right).
\]
Moreover, \(E_W\) is globally asymptotically stable in the
non-negative quadrant.
\end{theorem}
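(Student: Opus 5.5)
The plan is to find the equilibria directly, then reduce the global dynamics to a scalar equation for the difference $u=r_1-r_2$.

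\textbf{Step 1: well-posedness and invariance.} The vector field in \eqref{eq:N=2} is globally Lipschitz, because $F$ is $1$-Lipschitz. Solutions therefore exist and are unique for all $t\ge 0$, with at most exponential growth. The non-negative quadrant is forward invariant: on $\{r_i=0\}$ we have $r_i'=F(\cdot)\ge 0$.

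\textbf{Step 2: equilibria.} At an equilibrium, $r_2=F(\delta(r_2-r_1))$.
\begin{itemize}
\item If $r_2>0$, then $r_2=\delta(r_2-r_1)$, i.e. $(1-\delta)r_2=-\delta r_1\le 0$. This contradicts $0<\delta<1$, so $r_2=0$.
\item With $r_2=0$, the first equation becomes $r_1=F(\delta r_1+V_1)$. Since $r_1\ge 0$ and $V_1>0$, the argument is positive, so $r_1=\delta r_1+V_1$.
\item Hence $r_1=V_1/(1-\delta)$. The second equation is consistent, because $F(-\delta r_1)=0$.
\end{itemize}
This gives existence and uniqueness of $E_W$ in the quadrant.

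\textbf{Step 3: the scalar equation for $u$.} Subtracting the two equations gives the autonomous scalar ODE
\[
u'=-u+F(\delta u+V_1)-F(-\delta u).
\]
\begin{itemize}
\item For $u\le 0$ it reads $u'=-(1-\delta)u+F(\delta u+V_1)$.
\item This is $\ge (1-\delta)|u|>0$ for $u<0$.
\item It is $\ge V_1+\delta u\ge V_1/2$ once $u\ge -V_1/(2\delta)$, and it equals $V_1>0$ at $u=0$.
\end{itemize}
So any trajectory with $u(0)\le 0$ increases. Its speed is bounded below away from zero on $[-V_1/(2\delta),0]$, and below that level it grows at least exponentially in $|u|$. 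Hence $u$ reaches $0$ in finite time, and since $u'(0)=V_1>0$ it stays strictly positive afterwards.

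\textbf{Step 4: convergence.} Once $u>0$, we have $\delta(r_2-r_1)<0$ and $\delta(r_1-r_2)+V_1>0$, so the system becomes linear:
\[
r_2'=-r_2,\qquad u'=-(1-\delta)u+V_1 .
\]
\begin{itemize}
\item $r_2\to 0$ exponentially.
\item $u\to V_1/(1-\delta)$ exponentially, and $u$ remains positive along the way.
\item Therefore $r_1=u+r_2\to V_1/(1-\delta)$, so every trajectory in the quadrant converges to $E_W$.
\end{itemize}

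\textbf{Step 5: Lyapunov stability.} $E_W$ lies in the open region $\{u>0\}$, where the vector field is smooth. Its Jacobian there is $\begin{pmatrix}\delta-1&-\delta\\0&-1\end{pmatrix}$, with eigenvalues $\delta-1<0$ and $-1<0$. So $E_W$ is locally exponentially stable, and combined with Step 4 it is globally asymptotically stable.

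\textbf{Main obstacle.} The delicate part is Step 3, when the trajectory starts with $r_2>r_1$. There the slope of $u\mapsto F(\delta u+V_1)-F(-\delta u)$ can be as large as $2\delta$, which may exceed $1$, so a contraction argument fails. I would avoid it by using the sign argument above: show that $u'$ is bounded below by a positive quantity until $u$ becomes positive, which gives a finite hitting time of $\{u>0\}$.
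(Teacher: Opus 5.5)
Your proof is correct, but it is organized differently from the argument the paper points to. The paper omits its proof and describes it only as a case-by-case analysis of the piecewise linear dynamics in the regions cut out by the switching lines of $F$, following trajectories as they pass from one region to the next.

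You instead notice that both arguments of $F$ depend only on $u=r_1-r_2$. So $u'=-u+F(\delta u+V_1)-F(-\delta u)$ is a closed autonomous scalar equation, and the planar system becomes a cascade: first $u$, then $r_2'=-r_2+F(-\delta u)$, then $r_1=u+r_2$. The switching lines are just the level sets $u=-V_1/\delta$ and $u=0$, so the order and finiteness of the crossings follow from a one-dimensional sign argument, with no phase-plane bookkeeping. In particular, the middle strip $-V_1/\delta<u<0$ causes no trouble, even though the $u$-dynamics there is expanding when $\delta>1/2$. Your $u$ is exactly the paper's order parameter $m_x=\sum_j r_j\cos\theta_j$ from Proposition~\ref{prop:m_xformulation} specialized to $N=2$ (where $m_y\equiv 0$). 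Your argument is thus the two-neuron instance of the order-parameter idea the paper itself uses for general $N$.

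Your route also yields more than is stated. Nothing in Steps 3--5 uses the signs of $r_1,r_2$, so the scalar equation shows that $E_W$ is the unique equilibrium in all of $\mathbb{R}^2$ and attracts every initial condition there. You also get an explicit bound on the entry time into $\{u>0\}$ and explicit exponential rates afterwards ($1$ for $r_2$, $1-\delta$ for $u$).

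What a region-by-region analysis keeps is robustness. It does not rely on all switching being governed by a single linear combination of $r_1,r_2$, a feature that is lost, for example, once a uniform coupling $J_0\neq 0$ is added.

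A minor wording point: in Step 3, ``grows at least exponentially in $|u|$'' is loose. What you actually have is the uniform bound $u'\ge\min\{V_1/2,\,(1-\delta)V_1/(2\delta)\}$ on $(-\infty,0]$, which gives the finite hitting time directly.
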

The proof, which consists of a careful but elementary case-by-case analysis, is omitted here. The dynamics are piecewise linear; the nonlinearity only appears when trajectories cross the boundaries separating the regions defined by the argument of the function $F$.

\subsubsection{A minimimal stochastic process for a two states fly's flight}
Here, we consider the simplest case in dimension two. There are only two opposite directions, and as described above, we add to the ODE a Brownian perturbation together with a random switching mechanism, i.e., $(V_1(t),V_2(t))$ alternates randomly between the two states $(c,0)$ and $(0,c)$, with $c>0$.
Formally, let $Z_t=(X_t,I_t)\in\mathbb R^2\times\{1,2\}$, where
$X_t=(r_{1t},r_{2t})$ solves
\begin{equation}
\label{eq:EDS2d}
 \left\{\begin{array}{rcl}
   dr_{1t} &=& \big(-r_1 + F(\delta(r_1-r_2)+V_1(I_t))\big)dt
+ \sigma_1 dB_{1t}, \\
dr_{2t} &=& \big(-r_2 + F(\delta(r_2-r_1)+V_2(I_t))\big)dt
+ \sigma_2 dB_{2t},
 \end{array}
 \right.
\end{equation}

with $F(x)=\max(x,0)$, $B_t=(B_t^1,B_t^2)$ be a two–dimensional Brownian motion. The switching process $I_t$ is a continuous-time two-state Markov chain with rate $\lambda>0$: when $I_t=1$, we have $V_1=c$ and $V_2=0$, and when $I_t=2$, we have $V_1=0$ and $V_2=c$.

In other words, $I_t\in\{1,2\}$ is a continuous-time Markov chain, independent of $B_t$, with generator
\[
Q=
\begin{pmatrix}
-\lambda & \lambda\\
\lambda & -\lambda
\end{pmatrix},
\qquad \lambda>0.
\]
We assume:
\[
\delta<1,
\qquad
\sigma_1,\sigma_2>0.
\]
For $i=1,2$ define the drift
\[
b_i(x)
=
- x
+
G\!\big(\delta Lx + V^i\big),
\qquad x\in\mathbb R^2,
\]
where $x=(r_1,r_2)$, $L:\mathbb R^2\to\mathbb R^2$ is linear,
$v_i\in\mathbb R^2$,
$\delta>0$,
and $G((y_1,y_2))=(F(y_1),F(y_2))$. 

Equation \eqref{eq:EDS2d} defines a switching diffusion process, a class of systems for which a well-developed theoretical framework is available; see, for instance, \cite{YinZhu2010}. In the present work, however, we emphasize dynamical systems aspects and adopt a semigroup framework.
Let \[P_t f(z)=\mathbb E_z[f(Z_t)].\]
The following theorem establishes the well-posedness of \eqref{eq:EDS2d}, shows that $(P_t)_{t\geq 0}$ defines a semigroup, and provides the existence and explicit characterization of its generator.Let $C_b$ (resp $C_b^2$) denote the space of continuous (resp. twice continuously differentiable) bounded functions on $\mathbb R^2\times\{1,2\}$. 
\begin{theorem}

\begin{enumerate}
\item For every initial condition $(x,i) \in \mathbb{R}^2 \times \{1,2\}$, equation \eqref{eq:EDS2d} admits a pathwise unique continuous solution.
\item The family $(P_t)_{t \ge 0}$ defines a strongly continuous semigroup on $C_b$. Moreover, for every $f \in C_b^2$,
\[
\frac{d}{dt} P_t f = \mathcal{L} P_t f = P_t \mathcal{L} f,
\]
where $\mathcal{L}$ denotes the generator of $(P_t)$.

\item  The generator $\mathcal{L}$ is given, for $f \in C_b^2$, by
\[
\mathcal L f(x,i)
=
\langle b_i(x),\nabla_x f(x,i)\rangle
+\frac12 \sum_{k=1}^2 \sigma_k^2 \partial_{kk} f(x,i)
+\lambda\big(f(x,j)-f(x,i)\big),
\quad j\neq i.
\]

\end{enumerate}
\end{theorem}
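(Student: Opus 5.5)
The approach is to build the switching diffusion piecewise between the jump times of $I_t$. On each inter-jump interval the drift is frozen at one of the two fields $b_1$, $b_2$, so the problem reduces to an ordinary SDE with a globally Lipschitz drift. The semigroup and generator statements will then follow from It\^o's formula applied on these intervals, combined with the compensator of the Markov chain.

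\textbf{Well-posedness.} First, $F(x)=\max(x,0)$ is $1$-Lipschitz. Hence for each $i\in\{1,2\}$,
\[
|b_i(x)-b_i(y)|\le (1+\delta\|L\|)\,|x-y| ,
\]
and $|b_i(x)|\le C(1+|x|)$. Since the noise is additive with constant $\sigma_k$, the classical It\^o theorem gives a pathwise unique strong solution for each frozen $i$. We realize $I_t$ on the same probability space, independent of $B$. Let $0=\tau_0<\tau_1<\dots$ be its jump times; they are exponential with rate $\lambda$, so a.s. $\tau_n\to\infty$ and there are only finitely many jumps on bounded intervals. On $[\tau_n,\tau_{n+1})$ we solve the SDE with drift $b_{I_{\tau_n}}$, starting from $X_{\tau_n}$. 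By the strong Markov property of $B$, restarted at stopping times independent of it, concatenation yields a continuous solution. Pathwise uniqueness holds on each interval by a Gronwall argument, and hence globally by induction on $n$.

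\textbf{Semigroup.} We first note that $Z_t=(X_t,I_t)$ is a time-homogeneous Markov process, by uniqueness and the independence of increments of $(B,I)$. The Chapman--Kolmogorov equation then gives $P_{t+s}=P_tP_s$. Next we show the Feller property, i.e.\ that $P_t$ maps $C_b$ to $C_b$. Fix the noise path and the path of $I$, and apply Gronwall to two solutions with initial points $x,y$:
\[
|X^x_t-X^y_t|\le e^{Ct}|x-y| .
\]
Continuity of $x\mapsto P_tf(x,i)$ then follows by dominated convergence; continuity in $i$ is trivial on the discrete set. Strong continuity is where I expect the main obstacle. On $C_b$ with the sup norm, $\|P_tf-f\|_\infty\to0$ fails in general for merely bounded continuous $f$ on an unbounded space. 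I would handle this in one of two ways. The first is to work with $C_0$ or with bounded uniformly continuous functions: the estimate $\mathbb E|X_t-x|\le C(1+|x|)t+C\sqrt t$ gives uniformity on compacts, and the linear growth must be controlled at infinity. The second is to interpret ``strongly continuous'' in the bounded-pointwise, locally uniform sense. I would state precisely which convention is used and prove the convergence in that sense. Pointwise convergence $P_tf(z)\to f(z)$ follows from continuity of paths, together with $\mathbb P(\tau_1\le t)\to0$.

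\textbf{Generator and Kolmogorov equation.} Take $f\in C_b^2$ (with bounded derivatives). Apply It\^o's formula to $f(X_t,I_t)$, treating the diffusion part on the inter-jump intervals and adding the jump contributions $f(X_{\tau_n},I_{\tau_n})-f(X_{\tau_n},I_{\tau_n-})$. The jump sum is compensated by
\[
\int_0^t \lambda\big(f(X_s,j)-f(X_s,I_s)\big)\,ds ,
\]
since $N_t-\lambda t$ is a martingale for the counting process of $I$. This yields Dynkin's formula
\[
P_tf(z)-f(z)=\int_0^t P_s\mathcal Lf(z)\,ds .
\]
The stochastic integral is a true martingale because $\nabla f$ is bounded. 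For $f\in C_b^2$, $\mathcal Lf$ may be unbounded only through $\langle b_i(x),\nabla f\rangle$, which has linear growth. It is, however, continuous, and moment bounds $\mathbb E\sup_{s\le t}|X_s|^2<\infty$ (from Gronwall) make $s\mapsto P_s\mathcal Lf$ continuous. Dividing by $t$ and letting $t\to0$ identifies $\mathcal L$ with the stated formula, which is item 3. It also gives $\frac{d}{dt}P_tf=P_t\mathcal Lf$. To obtain $\mathcal LP_tf=P_t\mathcal Lf$, we use the semigroup property:
\[
\frac{P_h(P_tf)-P_tf}{h}=P_t\,\frac{P_hf-f}{h}\longrightarrow P_t\mathcal Lf .
\]
This shows that $P_tf$ lies in the domain of $\mathcal L$ with $\mathcal LP_tf=P_t\mathcal Lf$. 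We do not need $P_tf\in C^2$; the identity holds in the generator-domain sense, which is what item 2 asserts.
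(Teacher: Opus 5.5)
Your proposal is correct and follows essentially the paper's route: a Lipschitz drift, solving between the jump times of $I_t$ and concatenating, a pathwise Gronwall bound for the Feller property, and It\^o's formula on inter-jump intervals for the generator. Your compensator/Dynkin argument makes the paper's ``$\lambda\,dt+o(dt)$'' heuristic rigorous, and your semigroup-property argument supplies the Kolmogorov identity the paper omits.

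Your caveat on strong continuity is justified: the paper only shows $P_tC_b\subset C_b$, and sup-norm continuity on $C_b$ fails for this linearly growing drift. It fails on bounded uniformly continuous functions as well, e.g.\ for $f(x,i)=\sin x_1$, since along $\{x_2=0\}$ the first drift component is $-(1-\delta)x_1+O(1)$. So only your $C_0$ option (via $P_tC_0\subset C_0$, from the reverse Gronwall bound $|X^x_t-X^0_t|\ge e^{-Ct}|x|$, plus pointwise continuity) or the locally uniform interpretation actually works.
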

\begin{proof}
We outline the main steps of the proof, deliberately omitting detailed arguments that follow from standard results.
We first remark that for $a,b \in \mathbb R^2$:
\[
|G(a)-G(b)| \le |a-b|.
\]
Since $L$ is linear, there exists $C_L>0$ such that
\[
|Lx-Ly| \le C_L |x-y|.
\]

Thus
\[
|G(\delta Lx+V^i)-G(\delta Ly+V^i)|
\le
|\delta|\, C_L |x-y|.
\]

Therefore
\[
|b_i(x)-b_i(y)|
\le
|x-y| + |\delta|C_L |x-y|
=
C |x-y|,
\]
for some constant $C$ independent of $x,y$.

Hence $b_i$ is globally Lipschitz and of linear growth.
Between successive switching times of $I_t$, the system reduces to a classical SDE of the form
\[
dX_t = b_i(X_t)\,dt + \Sigma\, dB_t,
\]
where $\Sigma=\mathrm{diag}(\sigma_1,\sigma_2)$.
Since $b_i$ is globally Lipschitz and $\Sigma$ is constant,
standard results from SDE theory—based, for instance, on a Picard iteration argument (see, e.g., \cite{oksendal2003sde,karatzas1991brownian})—ensure the existence and uniqueness of a continuous solution.
Moreover, the Markov chain $I_t$ has almost surely finitely many jumps on any compact time interval. One can therefore construct the solution pathwise by solving the SDE successively between jump times. Uniqueness on each interval then yields global uniqueness.
Next, we prove that, $P_t$ maps $C_b$ into itself.
First, for boundedness, we remark that
\[|P_t f|=|E[f(Z_t)]|=|\int f(z)p_{z_t}(z)dz|\leq \|f\|_\infty.\]
where $p_{z_t}$ denotes the density probability of $Z(t)$.
Next, let $X_t^{x}$ and $X_t^{x_n}$ denote the solutions of the SDE starting from
initial conditions $x$ and $x_n$, respectively. For simplicity, we omit the index $i$
The integral formulation of the solutions gives
\[
X_t^{x} = x + \int_0^t b(X_s^{x},I_s)\,ds + \Sigma B_t,
\]
\[
X_t^{x_n} = x_n + \int_0^t b(X_s^{x_n},I_s)\,ds + \Sigma B_t.
\]

Subtracting the two equations,
we obtain
\[
 X_t^{x_n} - X_t^{x} = (x_n-x) + \int_0^t
\big(b(X_s^{x_n},I_s)-b(X_s^{x},I_s)\big)\,ds .
\]

Since the drift is globally Lipschitz, there exists a constant $L>0$ such that
\[
|b(x,i)-b(y,i)| \le L |x-y|
\]
for all $x,y$ and $i\in\{1,2\}$. Therefore
\[
|X_t^{x_n} - X_t^{x}|
\le |x_n-x| + \int_0^t L |X_t^{x_n} - X_t^{x}|\,ds .
\]

Applying Gronwall's inequality yields
\[
|X_t^{x_n} - X_t^{x}| \le |x_n-x| e^{Lt}.
\]

Taking expectations gives
\[
\mathbb E |X_t^{x_n}-X_t^{x}|
\le |x_n-x| e^{Lt}.
\]

Hence, if $x_n\to x$, the right-hand side converges to $0$, which implies
\[
\mathbb E |X_t^{x_n}-X_t^{x}| \to 0.
\]

In particular,
\[
X_t^{x_n} \to X_t^{x}
\]
in probability.
Hence, by standard argument, for $f\in C_b$,
$P_t f$ is continuous.
Thus $P_t$ maps $C_b$ into itself.

Finally, we prove the characterization of the infinitesimal generator.
Let $f\in C_b^2(\mathbb R^2\times\{1,2\})$ and consider the process
$Z_t=(X_t,I_t)$ with $X_t=(r_{1t},r_{2t})$.  
Fix $(x,i)$ and write $b(x,i)=(b_1(x,i),b_2(x,i))$ for the drift:
\[
b_1(x,i)=-x_1+F(\delta(x_1-x_2)+V_1(i)), \qquad
b_2(x,i)=-x_2+F(\delta(x_2-x_1)+V_2(i)).
\]

We compute the infinitesimal generator $A$ defined by
\[
\mathcal L f(x,i) = \lim_{t\to 0} \frac{P_t f(x,i) - f(x,i)}{t},
\]
whenever the limit exists.

Between two switching times of the Markov chain $I_t$, the index $i$ is
constant and the process $X_t$ satisfies the diffusion system
\[
dX_t = b(X_t,i)\,dt + \Sigma\, dB_t,
\]
where $\Sigma=\mathrm{diag}(\sigma_1,\sigma_2)$.

Applying Itô's formula to $f(X_t,i)$ gives
\[
df(X_t,i)
=
\langle b(X_t,i),\nabla_x f(X_t,i)\rangle\,dt
+
\frac12 \sum_{k=1}^2 \sigma_k^2
\partial_{kk} f(X_t,i)\,dt
+
\sum_{k=1}^2 \sigma_k
\partial_k f(X_t,i)\, dB_t^k .
\]

Taking expectations, the stochastic integral has mean zero, so the
expected contribution of the continuous part over a small time interval
$dt$ is
\[
\langle b(x,i),\nabla_x f(x,i)\rangle dt
+
\frac12 \sum_{k=1}^2 \sigma_k^2 \partial_{kk}f(x,i) dt.
\]

\medskip
The switching process $I_t$ is a two-state Markov chain with rate
$\lambda$. Starting from state $i$, during a small time interval $dt$:

\begin{itemize}
\item[-] with probability $1-\lambda dt + o(dt)$, no jump occurs and $I_t$
remains equal to $i$,
\item[-] with probability $\lambda dt + o(dt)$, a jump occurs and the state
changes to $j\neq i$.
\end{itemize}
If a jump occurs, the value of $f$ changes from $f(x,i)$ to $f(x,j)$.
Therefore the expected increment of $f$ due to switching is
\[
\lambda \big(f(x,j)-f(x,i)\big) dt + o(dt).
\]

\medskip

Combining the diffusion and switching contributions and dividing by $dt$,
then letting $dt\to0$, yields the infinitesimal generator
\[
\mathcal L f(x,i)
=
\langle b(x,i),\nabla_x f(x,i)\rangle
+
\frac12 \sum_{k=1}^2 \sigma_k^2 \partial_{kk}f(x,i)
+
\lambda\big(f(x,j)-f(x,i)\big),
\qquad j\neq i .
\]
The other details of the proof are omitted here. Another approach is to verify the Hille-Yoshida theorem.
\end{proof}

Next, we establish the existence and uniqueness of an invariant measure for the process $Z_t$.

\begin{theorem}
\label{th:ErgodicMeasure}
    There exists a unique probability measure $\pi$ such that
\[
\int P_tf \, d\pi = \int f \, d\pi
\quad \text{for all } f \in C_b.
\] Furthemore,$\pi$  satisfies
   \[
\frac1T\int_0^T f(Z_t)dt
\to
\int f\,d\pi
\quad \text{a.s.}
\]
\end{theorem}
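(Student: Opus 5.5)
Existence follows from a Lyapunov function and a tightness argument, uniqueness from strong Feller regularity plus irreducibility, and the almost-sure time averages from ergodicity of the unique invariant measure.

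\textbf{Step 1: Lyapunov function.} Take $W(x,i)=1+|x|^2$, which is independent of $i$, so the switching term of $\mathcal L$ vanishes. By the previous theorem,
\[
\mathcal L W(x,i)=2\langle b_i(x),x\rangle+\sigma_1^2+\sigma_2^2 .
\]
Since $F\ge 0$ and $|F(y)|\le |y|$, we have $|G(\delta Lx+V^i)|\le \delta|Lx|+c$. For the two-dimensional system $L$ has rows $(1,-1)$ and $(-1,1)$, so $|Lx|\le 2|x|$. This bound alone would require $\delta<1/2$, so I would compute directly instead. Write $u=x_1-x_2$. Because $F(y)\le \max(y,0)$, and considering the sign cases of $u$,
\[
x_1F(\delta u+V_1)+x_2F(-\delta u+V_2)\le \delta u^2/2\cdot(\text{const})+c|x| \quad\text{up to signs.}
\]
A cleaner route is to bound $F(\delta u+V_1)\le \delta|u|+c$ and use $|u|\le\sqrt2|x|$. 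This gives $\langle G,x\rangle\le \delta\sqrt2\cdot\sqrt2|x|^2/\sqrt2$, which again asks for a smaller $\delta$.

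I therefore expect to use the specific sign structure: only one of $\delta u$ and $-\delta u$ is positive. When $u>0$, for large $|x|$ the term is at most $x_1(\delta u+c)+x_2 c$, and
\[
x_1u=x_1^2-x_1x_2\le \tfrac{3}{2}x_1^2+\tfrac12 x_2^2 .
\]
This is still not obviously below $|x|^2/\delta$. This is the main obstacle. The fallback is a weighted function $W=e^{\epsilon|x|}$, or working in the coordinates $s=x_1+x_2$ and $u$. There $\dot u=-u+F(\delta u+V_1)-F(-\delta u+V_2)$, whose linear part is $-(1-\delta)u$ for large $|u|$, and $\dot s\le -s+\delta|u|+2c$. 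The quadratic function $W=u^2+\kappa s^2$ with $\kappa$ small then gives
\[
\mathcal LW\le -\alpha W+K, \qquad \alpha>0 .
\]

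\textbf{Step 2: existence of $\pi$.} From Step 1 and Dynkin's formula, $\mathbb E W(Z_t)\le W(z)e^{-\alpha t}+K/\alpha$. Hence the time averages $\mu_T=\frac1T\int_0^T\delta_z P_s\,ds$ are tight. Since $P_t$ is Feller on $C_b$ by the previous theorem, Krylov–Bogoliubov shows that any weak limit point $\pi$ is invariant, i.e.\ $\int P_tf\,d\pi=\int f\,d\pi$.

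\textbf{Step 3: uniqueness.} The diffusion matrix $\Sigma\Sigma^T$ is nondegenerate, since $\sigma_1,\sigma_2>0$, and the drift is globally Lipschitz. Therefore, between switching times, $X_t$ has a strictly positive continuous transition density, by Girsanov or the support theorem. The two-state chain is irreducible. Conditioning on there being no jump in $[0,t]$, which has probability $e^{-\lambda t}>0$, the transition kernel of $Z_t$ charges every open set of $\mathbb R^2\times\{1,2\}$, and $P_t$ is strong Feller.

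Strong Feller plus topological irreducibility implies that all invariant measures are mutually equivalent, and ergodic invariant measures are mutually singular. By Doob–Khasminskii, there is at most one invariant measure. Alternatively, one can cite the switching-diffusion results of \cite{YinZhu2010}.

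\textbf{Step 4: ergodic averages.} Uniqueness makes $\pi$ ergodic, so Birkhoff's theorem for the stationary process gives
\[
\frac1T\int_0^T f(Z_t)\,dt\to\int f\,d\pi
\]
almost surely when $Z_0\sim\pi$. To extend this to every initial point $z$, I would use the equivalence of the laws: since $P_t(z,\cdot)\ll\pi$ for $t>0$, the convergence set $\{\omega:\text{limit holds}\}$ is shift-invariant and has full $\pi$-measure. Applying the Markov property at time $t$ then gives probability one for every deterministic starting point.
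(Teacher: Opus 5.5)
Your proposal is correct. It shares the paper's skeleton (Lyapunov bound, tightness of $\mu_T$, Krylov--Bogoliubov, then uniqueness and a.s.\ averages), but it departs from the paper at the two points that carry the weight.

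\textbf{The Lyapunov function.} The paper takes $\mathcal V=|x|^2$ and asserts $\mathcal L\mathcal V\le-\alpha\mathcal V+\beta$ for every $\alpha<2(1-\delta)$. However, its formula for $\mathcal L\mathcal V$ drops the factor $2$ in front of $r_1F(\cdot)+r_2F(\cdot)$. Restore that factor and look at the region $\delta(x_1-x_2)>c$. There the drift is $Mx+O(1)$ with the non-normal matrix $M=\begin{pmatrix}-1+\delta&-\delta\\0&-1\end{pmatrix}$. Along $x=R(\cos\frac{\pi}{8},-\sin\frac{\pi}{8})$ one finds $\mathcal L|x|^2=(-2+(1+\sqrt2)\delta)R^2+O(R)$. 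So $|x|^2$ is a Lyapunov function only for $\delta<2(\sqrt2-1)\approx0.83$.

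Your failure to close the $|x|^2$ estimates is therefore not a lack of cleverness. Your fallback $W=u^2+\kappa s^2$ is exactly what handles the whole range $0<\delta<1$. Write it out cleanly using three bounds:
\[
u\,(F(\delta u+V_1)-F(-\delta u+V_2))\le\delta u^2+c|u|,
\]
\[
0\le F(\delta u+V_1)+F(-\delta u+V_2)\le\delta|u|+c,
\]
\[
2\kappa\delta|s||u|\le\kappa s^2+\kappa\delta^2u^2 .
\]
The lower bound $0\le\cdots$ in the second line is needed when $s<0$. Together they give
\[
\mathcal LW\le-(2(1-\delta)-\kappa\delta^2)u^2-\kappa s^2+2c|u|+2\kappa c|s|+(1+\kappa)(\sigma_1^2+\sigma_2^2),
\]
which yields $\mathcal LW\le-\alpha W+K$ once $\kappa<2(1-\delta)/\delta^2$. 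The false starts in your Step 1 should simply be deleted.

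\textbf{Uniqueness and time averages.} The paper gives no argument here; it calls them standard and cites Meyn--Tweedie and Yin--Zhu. You supply one, including the transfer from $Z_0\sim\pi$ to every deterministic starting point via shift-invariance of the convergence event and $P_t(z,\cdot)\ll\pi$. That transfer is what the statement as written needs.

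There is one slip. Conditioning on no jump in $[0,t]$ only shows that $P_t((x,i),\cdot)$ charges open subsets of $\mathbb R^2\times\{i\}$. For the other sheet, condition on exactly one jump, which has probability $\lambda te^{-\lambda t}>0$.

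You can also avoid proving strong Feller separately. The law of $(I_s)$ does not depend on $x$, so condition on its whole path and apply Girsanov (linear-growth drift, constant nondegenerate $\Sigma$). This shows that every $P_t(z,\cdot)$, $t>0$, is equivalent to Lebesgue measure on each sheet. That directly gives the mutual equivalence used by Doob's theorem, and the absolute continuity $P_t(z,\cdot)\ll\pi$ used in Step 4.
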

To prove theorem \ref{th:ErgodicMeasure}, we first prove the existence of the measure $\pi$ in proposition \ref{prop:exsitence}, uniqueness follows from classical arguments and is omitted. We start with preliminary results.

Define $\mathcal{V}(x,i)=|x|^2=r_1^2+r_2^2$.

\begin{lemma}
There exist constants $\alpha>0$ and $\beta>0$ such that
\[
\mathcal L \mathcal{V}(x,i) \le -\alpha \mathcal{V}(x,i)+\beta.
\]
Furthermore, one may take any $\alpha<2(1-\delta)$.
\end{lemma}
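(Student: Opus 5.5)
The plan is to compute $\mathcal L\mathcal V$ exactly from the generator formula of the previous theorem. Then I reduce the inequality to a bound on a single piecewise quadratic form on $\mathbb R^2$, and check the constant on the worst ray. I expect that check to show the ``furthermore'' range $\alpha<2(1-\delta)$ cannot hold as stated, so the proposal ends with the sharp range this argument actually yields.

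\textbf{Step 1 (exact computation).} Since $\mathcal V(x,i)=|x|^2$ does not depend on $i$, the switching term $\lambda(\mathcal V(x,j)-\mathcal V(x,i))$ vanishes. With $\nabla_x\mathcal V=2x$ and $\partial_{kk}\mathcal V=2$ we get
\[
\mathcal L\mathcal V(x,i)=-2|x|^2+2\Big(r_1F\big(u+V_1(i)\big)+r_2F\big(-u+V_2(i)\big)\Big)+\sigma_1^2+\sigma_2^2,
\qquad u=\delta(r_1-r_2).
\]

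\textbf{Step 2 (removing the cue).} I use $F(a+b)\le F(a)+b$ for $b\ge 0$, and the fact that $rF(\cdot)\le 0$ whenever $r\le 0$. These give
\[
r_1F(u+V_1)+r_2F(-u+V_2)\le r_1^+F(u)+r_2^+F(-u)+c|x|.
\]
It then suffices to bound the homogeneous part $Q(x)=r_1^+F(u)+r_2^+F(-u)$ by $\kappa\delta|x|^2$. Young's inequality turns $c|x|$ into $\varepsilon|x|^2+c^2/(4\varepsilon)$. The result is
\[
\mathcal L\mathcal V\le -2(1-\kappa\delta-\varepsilon)\mathcal V+\beta_\varepsilon,
\]
so every $\alpha<2(1-\kappa\delta)$ works.

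\textbf{Step 3 (computing $\kappa$; the main obstacle).} By symmetry take $u\ge0$, i.e.\ $r_1\ge r_2$, so that $Q=\delta r_1^+(r_1-r_2)$.
\begin{itemize}
\item[-] If $r_2\ge0$, then $Q\le\delta r_1^2\le\delta|x|^2$, which is the paper's constant.
\item[-] If $r_1>0>r_2$, then $Q=\delta(r_1^2+r_1|r_2|)$. On the unit circle the maximum of $r_1^2+r_1|r_2|$ is $\kappa=\tfrac{1+\sqrt2}{2}>1$, attained at angle $-\pi/8$.
\end{itemize}

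This bound is not an artifact of the estimate. Along the ray $x=s(\cos\tfrac\pi8,-\sin\tfrac\pi8)$ with $s\to\infty$, the computation in Step 1 is exact up to $O(s)$. Hence
\[
\mathcal L\mathcal V=-2(1-\kappa\delta)s^2+O(s).
\]
Consequently the inequality can hold only for $\alpha\le 2(1-\kappa\delta)$. For $\alpha\in(2(1-\kappa\delta),2(1-\delta))$ the lemma's ``furthermore'' clause fails. Moreover, if $\kappa\delta\ge1$, i.e.\ $\delta\ge 2/(1+\sqrt2)\approx0.83$, no $\alpha>0$ works at all with this $\mathcal V$.

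\textbf{Conclusion.} What the argument proves is the lemma under the hypothesis $\delta<2/(1+\sqrt2)$, with any $\alpha<2(1-\kappa\delta)$. The range $\alpha<2(1-\delta)$ is obtained only on the quadrant $\{r_1r_2\ge0\}$.

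If one insists on all $0<\delta<1$, I would try a different Lyapunov function rather than $|x|^2$. A natural candidate is the quadratic form $x^{\top}Px$ adapted to the matrix $\begin{pmatrix}1-\delta&\delta\\ \delta&1-\delta\end{pmatrix}$ that governs the regime $r_1>0>r_2$, possibly combined with a penalization of the negative parts $r_k^-$. The aim would be a drift inequality with rate close to $2(1-\delta)$. I expect this replacement of $\mathcal V$ to be the genuinely hard step, and it does not prove the lemma in the form stated.
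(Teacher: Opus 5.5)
Your computation is correct, and so is your conclusion: with $\mathcal V=|x|^2$, the lemma as stated is false. The ``furthermore'' clause fails for every $\delta\in(0,1)$, and the existence claim itself fails for $\delta\ge 2(\sqrt2-1)$.

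The paper's proof follows your plan: compute $\mathcal L\mathcal V$, then absorb the cue with Young's inequality. Its displayed formula for $\mathcal L\mathcal V$, however, carries $r_1F(\cdot)+r_2F(\cdot)$ instead of $2r_1F(\cdot)+2r_2F(\cdot)$. This contradicts the generator, since $\nabla_x\mathcal V=2x$. It also contradicts the paper's own It\^o computation in the next lemma, which correctly has $2r_1b_1+2r_2b_2$. The paper then states \eqref{eq:LV} without justification. With the dropped factor, \eqref{eq:LV} would follow from your bound $Q\le\kappa\delta|x|^2$, because $\kappa<2$. With the correct factor, \eqref{eq:LV} fails along your ray $s(\cos\frac\pi8,-\sin\frac\pi8)$ whenever $\nu<(\kappa-1)\delta$. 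So the gap is in the paper's argument, not in yours.

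Your constant is also sharp for this $\mathcal V$. On $\{r_1>r_2\}$ the cue-free drift is $-Mx$ with $M=\begin{pmatrix}1-\delta&\delta\\0&1\end{pmatrix}$. The number $1-\kappa\delta$ is exactly the smallest eigenvalue of $(M+M^{\top})/2$, and its eigenvector sits at angle $-\pi/8$.

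Your corrected statement is all that the subsequent moment bound and the tightness (Prokhorov) argument require, since they only use some $\alpha>0$. It also covers the value $\delta=0.2$ used in the simulations. One small omission: in Step 3 you should list the case $r_1\le 0$, where $Q=0$; it is trivial.

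Your proposed repair points at the wrong matrix. The matrix $\begin{pmatrix}1-\delta&\delta\\ \delta&1-\delta\end{pmatrix}$ governs only the bounded strip where both units are active. At large $|x|$ the relevant matrix is $M$ above (or its mirror image on $\{r_2>r_1\}$).

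A clean way to recover the full range $\alpha<2(1-\delta)$ for every $\delta\in(0,1)$ is to pass to $d=r_1-r_2$ and $s=r_1+r_2$. On both half-planes the cue-free drift becomes the triangular system $\dot d=-(1-\delta)d$, $\dot s=-s+\delta|d|$.

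Take $\mathcal W=d^2+\eta s^2$. For the cue-free drift, $\langle b_0,\nabla\mathcal W\rangle=-2(1-\delta)d^2-2\eta s^2+2\eta\delta|s||d|$. Use $2\eta\delta|s||d|\le\eta\epsilon s^2+\eta\delta^2d^2/\epsilon$ with $\epsilon<2\delta$, then let $\eta\to0$. This makes the rate arbitrarily close to $2(1-\delta)$. The remaining terms cause no trouble:
\begin{itemize}
\item[-] The cue is a bounded perturbation of the drift, since $|F(a+v)-F(a)|\le|v|$, so it contributes only $O(\sqrt{\mathcal W})$, which Young's inequality absorbs.
\item[-] The It\^o term is the constant $(1+\eta)(\sigma_1^2+\sigma_2^2)$.
\item[-] $2\eta|x|^2\le\mathcal W\le 2|x|^2$, so the paper's downstream arguments go through unchanged.
\end{itemize}
Thus the ``furthermore'' clause becomes true once $\mathcal V=|x|^2$ is replaced by $\mathcal W$, but it is false for $\mathcal V=|x|^2$ itself.
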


\begin{proof}
We compute
\[
\mathcal L \mathcal{V}((r_1,r_2,i)
=
-2(r_1^2+r_2^2)+r_1F(\delta(r_1-r_2)+V_1(i))+r_2F(\delta(r_2-r_1)+V_2(i))
+\sigma_1^2+\sigma_2^2.
\]
This implies,
\begin{equation}
\label{eq:LV}
    \mathcal L \mathcal{V}
\leq 
-2(1-\delta-\nu)(r_1^2+r_2^2)+c\frac{c^2}{8\nu}
+\sigma_1^2+\sigma_2^2.
\end{equation}

where $\nu $ can be chosen as small as desired,
which yields the result.
\end{proof}

\begin{lemma}
    The following estimate holds,
    \[
\mathbb E \mathcal{V}(Z_t)
\le
e^{-\alpha t}\mathcal{V}(Z_0)
+\frac{\beta}{\alpha},
\]
hence 
\[
\sup_{t\ge0}\mathbb E|X_t|^2<\infty.
\]
\end{lemma}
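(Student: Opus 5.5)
The estimate follows from the Lyapunov inequality of the preceding lemma, $\mathcal L\mathcal V\le -\alpha\mathcal V+\beta$, combined with Dynkin's formula and a Gronwall-type comparison. The one technical difficulty is that $\mathcal V(x,i)=|x|^2$ is unbounded, so it does not belong to $C_b^2$. The generator identity of the previous theorem therefore cannot be applied to $\mathcal V$ directly, and a localization step is needed.

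First localize. For $n\ge 1$, let $\tau_n=\inf\{t\ge0:\ |X_t|\ge n\}$. Consider the function $\varphi(t,x,i)=e^{\alpha t}\mathcal V(x,i)$. Apply Itô's formula for the switching diffusion on $[0,t\wedge\tau_n]$, that is, Itô's formula between the jump times of $I_t$. Since $\mathcal V$ does not depend on $i$, the switching term $\lambda(\mathcal V(x,j)-\mathcal V(x,i))$ vanishes, and the formula reads
\[
e^{\alpha (t\wedge\tau_n)}\mathcal V(Z_{t\wedge\tau_n})=\mathcal V(Z_0)+\int_0^{t\wedge\tau_n} e^{\alpha s}\big(\alpha\mathcal V+\mathcal L\mathcal V\big)(Z_s)\,ds+M_{t\wedge\tau_n},
\]
where $M_t=\int_0^t e^{\alpha s}\,2\sum_k\sigma_k X^k_s\,dB^k_s$. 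The integrand of this stochastic integral is bounded on $\{s\le\tau_n\}$, so the stopped martingale has zero mean. By the preceding lemma, $\alpha\mathcal V+\mathcal L\mathcal V\le\beta$. Taking expectations gives
\[
\mathbb E\big[e^{\alpha(t\wedge\tau_n)}\mathcal V(Z_{t\wedge\tau_n})\big]\le \mathcal V(Z_0)+\beta\,\mathbb E\int_0^{t\wedge\tau_n}e^{\alpha s}ds\le \mathcal V(Z_0)+\frac{\beta}{\alpha}(e^{\alpha t}-1).
\]

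Next remove the localization. The drift is globally Lipschitz, as shown in the proof of the well-posedness theorem, so the solution does not explode and $\tau_n\to\infty$ almost surely. Fatou's lemma then yields $e^{\alpha t}\mathbb E\mathcal V(Z_t)\le\mathcal V(Z_0)+\frac{\beta}{\alpha}(e^{\alpha t}-1)$. Multiplying by $e^{-\alpha t}$ gives
\[
\mathbb E\mathcal V(Z_t)\le e^{-\alpha t}\mathcal V(Z_0)+\frac{\beta}{\alpha}(1-e^{-\alpha t})\le e^{-\alpha t}\mathcal V(Z_0)+\frac{\beta}{\alpha}.
\]
Since $e^{-\alpha t}\le 1$, the right-hand side is at most $|x|^2+\beta/\alpha$ uniformly in $t$, which gives $\sup_{t\ge0}\mathbb E|X_t|^2<\infty$. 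If the initial condition is random, the same bound holds after conditioning on $Z_0$, provided $\mathbb E|X_0|^2<\infty$.

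The main obstacle is this localization argument. It requires two facts: that the stopped stochastic integral has zero mean, and that the process does not explode. Both come from the global Lipschitz bound on the drift and the constant diffusion coefficient established earlier. Once these are in place, the rest of the proof is the standard comparison argument.
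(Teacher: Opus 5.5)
Your proof is correct and follows the same route as the paper: Itô's formula for $\mathcal V$ along the switching diffusion, the drift bound $\mathcal L\mathcal V\le-\alpha\mathcal V+\beta$ from the preceding lemma, and an exponential comparison. Your version is more careful on two points. The stopping times $\tau_n$ justify the zero mean of the stochastic integral, which the paper takes for granted. Putting the weight $e^{\alpha t}$ inside Itô's formula replaces the paper's Gronwall step from $u(t)\le u(0)-\alpha\int_0^t u(s)\,ds+\beta t$; because the coefficient is negative, that step does not follow from the integral inequality alone and needs the differential form $u'\le-\alpha u+\beta$.
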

\begin{proof}
    We first apply It\^o's formula to $dV$,
    \[
\begin{aligned}
d\mathcal{V}(Z_t) 
&= \frac{\partial V}{\partial r_1} dr_1 + \frac{\partial V}{\partial r_2} dr_2 
   + \frac{1}{2} \frac{\partial^2 V}{\partial r_1^2} (\sigma_1^2) 
   + \frac{1}{2} \frac{\partial^2 V}{\partial r_2^2} (\sigma_2^2) \\
&= 2 r_1\, dr_1 + 2 r_2\, dr_2 + (\sigma_1^2 + \sigma_2^2)\, dt \\
&= 2 r_1 b_1(r_1,r_2,i)\, dt + 2 r_2 b_2(r_1,r_2,i)\, dt + (\sigma_1^2 + \sigma_2^2)\, dt
   + 2 r_1 \sigma_1\, dW_1 + 2 r_2 \sigma_2\, dW_2 \\
&= 2 r_1 b_1 + 2 r_2 b_2 + \sigma_1^2 + \sigma_2^2 
   + 2 r_1 \sigma_1\, dB_1 + 2 r_2 \sigma_2\, dB_2.
\end{aligned}
\]
Integrating, between $0$ and $t$, and taking expectations, we obtain, 
\[
\mathbb{E}[\mathcal{V}(Z_t)] = \mathcal{V}(Z_0) + \int_0^t \mathbb{E}[\mathcal{L}\mathcal{V}(Z_s)] \, ds.
\]
Using \eqref{eq:LV} gives
\[\mathbb{E}[\mathcal{V}(Z_t)] \leq \mathcal{\mathcal{V}}(Z_0)-\alpha \int_0^t \mathbb{E}[\mathcal{L}V(Z_s)]+\beta t\]
 Set 
 \[u(t)=\mathbb{E}[V(Z_t)],\]
 then we have
 \[u(t)\leq u(0)-\alpha \int_0^tu(s)ds+\beta t\]
 which, thanks to a Gronwall estimate type gives
\[u(t)\leq e^{-\alpha t}u(0)+\frac{\beta}{\alpha}(1-e^{-\alpha t}).\]
\end{proof}

We now  define empirical measures
\[
\mu_T(A)
=
\frac1T\int_0^T \mathbb P(Z_t\in A)\,dt.
\]
and prove the following convergence result.
\begin{proposition}[Existence]
\label{prop:exsitence}
There exists a subsequence $\mu_{T_k}$ and a measure $\pi$ such that
\[
\int f \, d\mu_{T_k} \;\longrightarrow\; \int f \, d\pi 
\quad \text{for all } f \in C_b.
\]
Furthermore, $\pi$ satisfies:
\begin{equation}
\label{eq:invariance}
\int P_tf \, d\pi = \int f \, d\pi
\quad \text{for all } f \in C_b.
\end{equation}
\end{proposition}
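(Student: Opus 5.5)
This is the Krylov–Bogoliubov argument: tightness of the time-averaged laws $\mu_T$ from the Lyapunov bound, then Prokhorov to extract a limit, then invariance from the Feller property of $(P_t)$. Throughout, fix a deterministic initial condition $Z_0=z=(x,i)$, so each $\mu_T$ is a probability measure on $\mathbb R^2\times\{1,2\}$.

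\textbf{Step 1 (tightness).} By the preceding lemma, $\mathbb E|X_t|^2\le e^{-\alpha t}|x|^2+\beta/\alpha\le |x|^2+\beta/\alpha=:M$ for all $t\ge0$. Let $K_R=\{|x|\le R\}\times\{1,2\}$, which is compact. Chebyshev's inequality gives $\mathbb P(Z_t\notin K_R)\le M/R^2$ uniformly in $t$. Averaging over $[0,T]$ yields $\mu_T(K_R^c)\le M/R^2$ uniformly in $T$, so $\{\mu_T\}_{T\ge1}$ is tight. Prokhorov's theorem then provides $T_k\to\infty$ and a probability measure $\pi$ with $\int f\,d\mu_{T_k}\to\int f\,d\pi$ for every $f\in C_b$. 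Tightness matters here: without it the limit could lose mass and fail to be a probability measure.

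\textbf{Step 2 (invariance).} Fix $s\ge0$ and $f\in C_b$. By the previous theorem $P_s f\in C_b$, so $\int P_sf\,d\mu_{T_k}\to\int P_sf\,d\pi$. The Markov property gives $\int g\,d\mu_T=\frac1T\int_0^T P_tg(z)\,dt$ and $P_tP_s=P_{t+s}$, hence
\[
\int P_sf\,d\mu_T-\int f\,d\mu_T=\frac1T\Big(\int_T^{T+s}P_tf(z)\,dt-\int_0^{s}P_tf(z)\,dt\Big).
\]
The right-hand side is bounded in absolute value by $2s\|f\|_\infty/T\to0$. Passing to the limit along $T_k$ gives $\int P_sf\,d\pi=\int f\,d\pi$, which is \eqref{eq:invariance}.

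\textbf{Main obstacle.} The delicate point is Step 2, specifically the use of the Feller property $P_s(C_b)\subset C_b$: weak convergence only allows passing to the limit against bounded continuous test functions, so this property is essential. It comes from the previous theorem, namely the Gronwall continuity estimate in $x$ together with the trivial continuity in the discrete component $i$. One further check is that $t\mapsto P_tf(z)$ is measurable, so that $\mu_T$ is well defined; this follows from the continuity of the paths of $X_t$ and the right-continuity of $I_t$.
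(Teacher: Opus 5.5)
Your proposal is correct and follows essentially the same Krylov--Bogoliubov route as the paper. Tightness comes from the uniform second-moment bound of the Lyapunov lemma plus Chebyshev, Prokhorov gives the limit $\pi$, and the time-shift identity yields $\bigl|\int P_sf\,d\mu_T-\int f\,d\mu_T\bigr|\le 2s\|f\|_\infty/T$ for invariance. Your version is slightly more careful than the paper's: you fix the initial condition, use the compact set $\{|x|\le R\}\times\{1,2\}$, and state explicitly that passing to the limit in $\int P_sf\,d\mu_{T_k}$ needs the Feller property $P_s(C_b)\subset C_b$ from the preceding theorem, which the paper uses only implicitly.
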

\begin{proof}
    We first prove that for every $\varepsilon > 0$ there exists a compact set $K \subset X$ such that
\begin{equation}
    \label{eq:tightness}
    \mu_T(K) \ge 1 - \varepsilon \quad \text{for all } T >0 .
\end{equation}
For $R>0$,
\[P(|Z_t|\leq R)=1-P(|Z_t|> R)\]
Next we remark that, thanks to the Bienayme-Chebyshev inequality,
\[
\mathbb P(|X_t|>R)
\le
\frac{\mathbb E|X_t|^2}{R^2}
\le
\frac{C}{R^2}.
\]
where the constant $C$ does not depend on $t$  nor in $i$. We deduce that
\[
P(|Z_t|>R)
\le
\frac{C}{R^2}.
\]
This proves \eqref{eq:tightness}. Next, a classical argument (Prokhorov's theorem) proves that  there exists a subsequence $\mu_{T_k}$ and a measure $\pi$ such that
\[
\int f \, d\mu_{T_k} \;\longrightarrow\; \int f \, d\pi 
\quad \text{for all } f \in C_b.
\]

From the semigroup properties, we deduce that $\pi$ satisfies \eqref{eq:invariance}. Indeed,
\[
\int P_t f \, d\mu_T 
= \frac{1}{T} \int P_t f(z)\int_0^T p_{z_s} \, ds\,dz=\frac{1}{T} \int_0^T \int P_t f(z) p_{z_s}\,dz \, ds
\]
where $p_{z_s}$ denotes the density probability of $Z(s)$. It follows that
\[
\int P_t f \, d\mu_T 
= \frac{1}{T} \int_0^T \mathbb{E}[P_t f(Z_s)] \, ds,
\]
\[
= \frac{1}{T} \int_0^T \mathbb{E}[f(Z_{s+t})] ds,
\]
\[
= \frac{1}{T} \int_t^{T+t} \mathbb{E}[f(Z_u)] du.
\]
Thus,
\[
\int P_t f \, d\mu_T-\int f \, d\mu_T
= \frac{1}{T} \int_t^{T+t} \mathbb{E}[f(Z_u)] du-\frac{1}{T} \int_0^{T} \mathbb{E}[f(Z_u)] du .
\]
\[
= \frac{1}{T} \bigg(\int_t^{T+t} \mathbb{E}[f(Z_u)] du-\frac{1}{T} \int_0^{T} \mathbb{E}[f(Z_u)]\bigg) du ,
\]
\[
= \frac{1}{T} \bigg(\int_T^{T+t} \mathbb{E}[f(Z_u)] du-\frac{1}{T} \int_0^{t} \mathbb{E}[f(Z_u)]\bigg) du.
\]
Since $f$ is bounded, we deduce that,
\[
\bigg|\int P_t f \, d\mu_T-\int f \, d\mu_T\bigg|\leq \frac{Ct}{T} \mbox{ for some constant } C. 
\]
Letting $T_k \to \infty$, we obtain,
\[
\int P_t f \, d\pi = \int f \, d\pi.
\]
\end{proof}

\begin{proof}[Proof of theorem \ref{th:ErgodicMeasure}]
To prove the theorem, it remains to prove that $\pi$ is unique. This follows from a standard argument in markov chains theory. We omit the details here and refer to \cite{meyn2009markov},\cite{YinZhu2010}. We also deduce that
  \[
\frac1T\int_0^T f(Z_t)dt
\to
\int f\,d\pi
\quad \text{a.s.}
\]
\end{proof}
\subsection{Qualitative Analysis for $N\in \mathbb{N}, N>2$}
Solutions of Equation \eqref{eq:linReLu} can be expressed, for $t \in (t_k, t_{k+1})$, as
\[
r(t) = e^{B_k (t - t_k)} r(t_k) + \int_{t_k}^{t} e^{B_k (t - s)} V(s)\, ds,
\]
where $B_k$ is the matrix associated with the active set of neurons on the interval $(t_k, t_{k+1})$.

More precisely, the state space is partitioned according to the regions
\[
\mathcal{A}_i = \left\{ r \in \mathbb{R}^N \;:\; \sum_{j=0}^{N-1} A_{ij} r_j + V_i > 0 \right\}, 
\quad i \in \{0,\dots,N-1\}.
\]

The subdivision $t_1,t_2...$ is such that on each interval $(t_k, t_{k+1})$, the trajectory remains in a region where the set $\mathcal{A}(t)$ of active neurons is fixed, that is, 
\[
    \mathcal{A}=\{j\in\{0,...,N-1\}; \sum_{j=0}^{N-1} A_{ij} r_j + V_j > 0\}
\]
do not change. As long as the trajectory remains within this region, the nonlinearity $F$ acts linearly, and the dynamics reduce to a linear system with matrix $B_k$. We are however proving a more qualitative result. We assume that $V_0>0$ and $V_i=0$ for $i\neq 0$. 

\begin{proposition}
\label{prop:m_xformulation}
We can rewrite Equation~\eqref{eq:linReLu}  as
\begin{equation}
    r_i' = -r_i + F\Big( \delta \big( m_x \cos\theta_i + m_y \sin\theta_i \big) + V_i \Big),
\end{equation}
or equivalently,
\begin{equation}
    r_i' = -r_i + F\Big( \delta R \cos(\theta_i - \phi) + V_i \Big),
\end{equation}
with
\[
m_x = \sum_{j=0}^{N-1} r_j \cos\theta_j, 
\qquad
m_y = \sum_{j=1}^{N-1} r_j \sin\theta_j,
\]
and with $(R, \phi)$ defined as the polar coordinates of $m_x + i m_y$ ,  
\[
m_x + i m_y=Re^{i\phi}
\]
Furthermore, if we assume that
\[\delta<\frac{1}{\sum_{i=1}^{N-1}\sin^2\theta_i},\]
then,
 \[\lim_{t\rightarrow +\infty}m_y=0\]
 and, 
 \[\forall j\in \{1,...,N-1\}, \lim_{t\rightarrow +\infty}r_j-r_{N-j}=0. \]
\end{proposition}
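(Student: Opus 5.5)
The plan is to argue directly from the structure of the cosine kernel, in three steps: rewrite the recurrent input, derive a closed differential inequality for $m_y$ using the reflection symmetry $\theta_i \mapsto -\theta_i$, and then transfer the decay of $m_y$ to the differences $r_j-r_{N-j}$.

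\textbf{Step 1 (rewriting).} By the addition formula $\cos(\theta_i-\theta_j)=\cos\theta_i\cos\theta_j+\sin\theta_i\sin\theta_j$, we have
\[
\sum_j A_{ij}r_j=\delta\,(m_x\cos\theta_i+m_y\sin\theta_i).
\]
Since $\sin\theta_0=0$, the term $j=0$ contributes nothing to $m_y$, so starting that sum at $j=1$ is harmless. Writing $m_x+i m_y=Re^{i\phi}$ gives $m_x\cos\theta_i+m_y\sin\theta_i=R\cos(\theta_i-\phi)$, which is the second form. The drift in \eqref{eq:linReLu} is globally Lipschitz, because $F$ is $1$-Lipschitz. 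Hence solutions exist for all $t\ge0$ and are $C^1$, and the computations below are legitimate.

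\textbf{Step 2 (decay of $m_y$).} Set $x_i=\delta(m_x\cos\theta_i+m_y\sin\theta_i)+V_i$ and $S=\sum_{i=1}^{N-1}\sin^2\theta_i$. Differentiating the definition of $m_y$ gives
\[
m_y'=-m_y+\sum_{i}\sin\theta_i\,F(x_i).
\]
The key device is the ``symmetrized'' input $x_i^0=\delta m_x\cos\theta_i+V_i$. Because $\cos\theta_{N-i}=\cos\theta_i$ and $V$ is supported only at $i=0$, where $\sin\theta_0=0$, we have $x^0_{N-i}=x^0_i$ for $1\le i\le N-1$. Since also $\sin\theta_{N-i}=-\sin\theta_i$, the terms cancel in pairs, so $\sum_i \sin\theta_i F(x_i^0)=0$; if $N$ is even, the self-paired index $i=N/2$ contributes nothing because $\sin\theta_{N/2}=0$. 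Therefore
\[
m_y'=-m_y+\sum_i \sin\theta_i\big(F(x_i)-F(x_i^0)\big).
\]
Using that $F$ is $1$-Lipschitz, $|F(x_i)-F(x_i^0)|\le\delta|m_y||\sin\theta_i|$. This yields
\[
\tfrac12\tfrac{d}{dt}m_y^2\le-(1-\delta S)\,m_y^2 .
\]
Since $1-\delta S>0$ by hypothesis, we get $|m_y(t)|\le|m_y(0)|e^{-(1-\delta S)t}\to0$, with an exponential rate.

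\textbf{Step 3 (symmetry of the bump).} For $j\in\{1,\dots,N-1\}$ let $d_j=r_j-r_{N-j}$. Then
\[
d_j'=-d_j+\big(F(x_j)-F(x_{N-j})\big),
\]
and, because $V_j=V_{N-j}=0$, we have $x_j-x_{N-j}=2\delta m_y\sin\theta_j$. Hence the forcing $g_j(t)=F(x_j)-F(x_{N-j})$ satisfies $|g_j(t)|\le 2\delta|\sin\theta_j||m_y(0)|e^{-(1-\delta S)t}$. Variation of constants gives
\[
d_j(t)=e^{-t}d_j(0)+\int_0^t e^{-(t-s)}g_j(s)\,ds .
\]
Both terms tend to $0$: the integral is a convolution of two decaying exponentials, and is even of order $te^{-t}$ if the rates coincide. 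So $r_j-r_{N-j}\to0$.

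\textbf{Main obstacle.} The only delicate point is Step 2. A naive bound on $\sum_i\sin\theta_iF(x_i)$ does not close, because it involves $m_x$ and the magnitude of the $r_i$. The obstacle is resolved by the subtraction of the $m_y$-independent reference input $x^0$, which relies on the cue being placed at $\theta_0$, where the sine vanishes; this is exactly why the hypothesis $V_i=0$ for $i\neq0$ is needed. With this device, no positivity of $r$ and no information on $m_x$ is required.
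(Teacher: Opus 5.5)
Your argument is correct. At the decisive step it takes a different and sturdier route than the paper.

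The paper sums over the active set $\mathcal{A}$ and writes $m_y'=-m_y+\delta m_x\sum_{j>0,\,j\in\mathcal{A}}\cos\theta_j\sin\theta_j+\delta m_y\sum_{j>0,\,j\in\mathcal{A}}\sin^2\theta_j$. It then drops the cross term by invoking $\cos\theta_{N-j}=\cos\theta_j$, $\sin\theta_{N-j}=-\sin\theta_j$. This gives the exact linear equation $m_y'=-\bigl(1-\delta\sum_{j>0,\,j\in\mathcal{A}}\sin^2\theta_j\bigr)m_y$. The differences $r_i-r_{N-i}$ are handled with an analogous closed formula.

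Cancellation by pairing requires $\mathcal{A}$ to be invariant under $j\mapsto N-j$, and this is not guaranteed once $m_y\neq0$. For $j\ge1$ the active set is the half-circle $\cos(\theta_j-\phi)>0$, centred at $\phi$ rather than at $0$. For example, with $N=3$ and $m_x=m_y=1$, only $j=1$ is active among $j\ge1$, and the cross term equals $-\sqrt{3}\,\delta/4\neq0$.

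Your subtraction of the symmetrised input $x^0$ avoids this. Its contribution vanishes by pairing regardless of which units are active. Combined with the $1$-Lipschitz bound on $F$, it gives $\tfrac12\tfrac{d}{dt}m_y^2\le-(1-\delta S)m_y^2$ with no information on $\mathcal{A}$, on $m_x$, or on the sign of $r$. Similarly, your Step 3 replaces a claimed identity by the bound $|F(x_j)-F(x_{N-j})|\le2\delta|\sin\theta_j||m_y|$ plus variation of constants.

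What each route buys:
\begin{itemize}
\item Where the active set happens to be symmetric, the paper's route yields an exact equation with a state-dependent, possibly faster, rate $1-\delta\sum_{j\in\mathcal{A}}\sin^2\theta_j$.
\item Yours yields the uniform rate $1-\delta S$, which is exactly what the stated hypothesis on $\delta$ is calibrated for. It holds for every $N$ and every initial condition, so it closes the gap in the published argument.
\end{itemize}
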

\begin{proof}
    We simply apply the formula,
    \[\cos(\theta_i-\theta_j)=\cos\theta_i\cos\theta_j+\sin\theta_i\sin \theta_j,\]
    therefore
    \[ r_i'=-r_i+F( \delta \sum_j \left(\cos\theta_i\cos\theta_j+\sin\theta_i\sin \theta_j\right)r_j +V_i )\]
 \[ =-r_i+F( \delta \left( m_x \cos\theta_i + m_y \sin\theta_i\right) +V_i )\]
  \[ =-r_i+F( \delta R\cos(\theta_i - \phi)  +V_i ).\]
Next, we compute $m_y'$. We have,
 \[m_y'=\sum_{j=1}^{N-1} r'_j \sin\theta_j\]
  \[=\sum_{j=1}^{N-1} \bigg(-r_i+F( \delta \left( m_x \cos\theta_i + m_y \sin\theta_i\right) +V_i )\bigg) \sin\theta_j\]
    \[=-m_y+\sum_{j>0,j\in \mathcal{A}} \delta \left( m_x \cos\theta_j \sin\theta_j + m_y \sin^2\theta_j\right) \]
    where $\mathcal{A}$ denotes the active indexes,
    \[\mathcal{A}=\{j\in\{0,...,N-1\}; \delta \left( m_x \cos\theta_j + m_y \sin\theta_j\right) +V_j>0\}.\]
    Now, since 
    \[\cos\theta_j=\cos\theta_{N-j}, \,\, \sin\theta_j=-\sin\theta_{N-j},\]
    we obtain,
    \[m_y'=-m_y+\delta m_y\sum_{j>0,j\in \mathcal{A}}\sin^2\theta_j,\]
    which proves that if
    \[\delta<\frac{1}{\sum_{i=1}^{N-1}\sin^2\theta_i},\]
    $m_y$ converges exponentially toward $0$.
    Finally, we remark that
    \[r_i'-r'_{N-i}=-(r_i-r_{N-i})+2\delta m_y\sum_{j>0,j\in \mathcal{A}}\sin^2\theta_j\]
    which proves the result since $m_y$ converges to $0$.

\end{proof}
We can now establish the existence of a locally stable bump centered at $\theta_0=0$.
\begin{theorem}
\label{th:steadystate}
We assume that
\[\delta<\min\bigg\{\frac{1}{\sum_{\theta_i \in \{-\frac{\pi}{2},\frac{\pi}{2}\}} \cos^2 \theta_i},\frac{1}{\sum_{i=1}^{N-1}\sin^2\theta_i}\bigg\},\]
then there exists a stationary solution $\bar{r}=\bar{r}_0,..,\bar{r}_{N-1}$ satisfying
\[
\bar{r}_0=\delta m_{\bar{x}}+V_0,\, \,\bar{r}_j=\left\{\begin{array}{c}
    \delta m_{\bar{x}} \cos\theta_j, if\,\, j>0\,\, and \,\,\theta_j \in \{-\frac{\pi}{2},\frac{\pi}{2}\} \\
    0 \,\  otherwise 
\end{array}\right.
\]
with,
\[
m_x=\frac{V_0}{1-\delta\sum_{\theta_i \in \{-\frac{\pi}{2},\frac{\pi}{2}\}} \cos^2 \theta_i} ,\,\, m_y=0.
\]
Furthermore, $\bar{r}$ is locally asymptotically stable in the positive space $r_j>0, j\in\{0,...,N-1\}$
\end{theorem}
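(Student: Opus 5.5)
The plan is to verify the stationary solution by a self-consistent ansatz, then obtain local stability from the fact that the vector field is linear near $\bar r$. I read the condition $\theta_i\in\{-\frac{\pi}{2},\frac{\pi}{2}\}$ as $\theta_i\in(-\frac{\pi}{2},\frac{\pi}{2})$ (mod $2\pi$). Let $\mathcal{A}^+=\{j:\cos\theta_j>0\}$, which contains $j=0$. The sum $S_c=\sum_{j\in\mathcal{A}^+}\cos^2\theta_j$ is the one in the hypothesis and in the formula for $m_x$, including the $j=0$ term. The formula $\bar r_0=\delta m_{\bar x}+V_0$ is the $j=0$ case of $\bar r_j=\delta m_{\bar x}\cos\theta_j+V_j$.

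\textbf{Existence.} By Proposition~\ref{prop:m_xformulation}, a stationary point satisfies $\bar r_i=F(\delta(m_x\cos\theta_i+m_y\sin\theta_i)+V_i)$. I will look for one with $m_y=0$ and $m_x>0$. Then $\bar r_j=\delta m_x\cos\theta_j+V_j$ for $j\in\mathcal{A}^+$, and $\bar r_j=0$ otherwise, since the argument is $\le 0$ there because $V_j=0$ for $j\neq 0$. The self-consistency conditions reduce to two scalar equations. First,
\[
m_x=\sum_j\bar r_j\cos\theta_j=V_0+\delta m_x S_c .
\]
Since $\delta S_c<1$, this gives $m_x=V_0/(1-\delta S_c)>0$, consistent with the ansatz. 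Second,
\[
m_y=\sum_{j\in\mathcal{A}^+}\delta m_x\cos\theta_j\sin\theta_j=0 .
\]
This holds by the reflection symmetry $j\mapsto N-j$, which preserves $\cos\theta_j$ and flips $\sin\theta_j$, so $\mathcal{A}^+$ is symmetric. The $j=0$ term carries $V_0$ but has $\sin\theta_0=0$, so it contributes nothing.

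\textbf{Local stability, generic case (no $\theta_j=\pm\pi/2$).} Every inactive index then has a strictly negative argument $\delta m_x\cos\theta_j<0$ at $\bar r$. Every active index has a strictly positive one. By continuity there is a neighbourhood of $\bar r$ on which the active set is frozen at $\mathcal{A}^+$, and there the system is exactly linear: $r'=-r+\delta P(cc^T+ss^T)r+V$. Here $c=(\cos\theta_j)_j$, $s=(\sin\theta_j)_j$, and $P$ is the projection onto the coordinates in $\mathcal{A}^+$. In block form with respect to active/inactive coordinates, the Jacobian is block triangular, with block $-I$ on the inactive coordinates. The active block is $-I+\delta(c_Ac_A^T+s_As_A^T)$. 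It equals $-I$ on the orthogonal complement of $\mathrm{span}\{c_A,s_A\}$. On that span its Gram matrix is diagonal, because $\sum_{\mathcal{A}^+}\cos\theta_j\sin\theta_j=0$ by symmetry. Hence the remaining eigenvalues are $-1+\delta S_c$ and $-1+\delta\sum_{\mathcal{A}^+}\sin^2\theta_j$. Both are negative: the first by the first hypothesis, and the second because $\sum_{\mathcal{A}^+}\sin^2\theta_j\le\sum_{i=1}^{N-1}\sin^2\theta_i$ together with the second hypothesis. Since the field is exactly linear near $\bar r$, this gives local exponential stability directly, not merely through linearization.

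\textbf{Main obstacle: the boundary case $4\mid N$.} Here the indices with $\theta_j=\pm\pi/2$ have argument $\pm\delta m_y$, which vanishes at $\bar r$, so $F$ is not differentiable along the trajectory. I would handle this with the $m_y$ equation from the proof of Proposition~\ref{prop:m_xformulation}:
\[
m_y'=-m_y+\delta m_y\sum_{j\in\mathcal{A},j>0}\sin^2\theta_j .
\]
This holds for every active set, so the second hypothesis forces $|m_y(t)|\le|m_y(0)|e^{-\kappa t}$ with $\kappa>0$ uniform.

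The plan then has three steps. The components at $\pm\pi/2$ satisfy $r_j'=-r_j+F(\pm\delta m_y)$, so they converge to $0$ exponentially. The components with $\cos\theta_j<0$ remain inactive near $\bar r$ and decay like $e^{-t}$. The remaining components satisfy the linear system treated in the generic case, driven by inputs that are exponentially small. A variation-of-constants estimate then gives convergence to $\bar r$ and Lyapunov stability, since the nonsmooth coordinates enter only through the Lipschitz, exponentially decaying forcing $F(\pm\delta m_y)$.

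Finally, I would note that "positive space" must be read as a neighbourhood of $\bar r$ within $\{r\ge 0\}$, since $\bar r$ itself has zero coordinates.
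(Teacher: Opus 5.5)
Your proposal is correct and follows the paper's route: a self-consistent equilibrium with $m_y=0$ and $m_x=V_0/(1-\delta S_c)$, then stability because the active set is frozen near $\bar r$, so the dynamics there are linear. It is more complete than the paper: the explicit spectrum $\{-1,\,-1+\delta S_c,\,-1+\delta\sum_{\mathcal{A}^+}\sin^2\theta_j\}$ replaces the paper's one-line invariance claim, and your cascade argument for $4\mid N$ covers a case that claim does not, since the indices at $\theta_j=\pm\pi/2$ sit on the switching surface at $\bar r$. One correction: the $m_y$ equation you import does not hold for every active set, because its derivation needs $\mathcal{A}$ to be invariant under $j\mapsto N-j$, which generally fails when $m_y\neq 0$. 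Near $\bar r$, however, the active set differs from $\mathcal{A}^+$ only by an index with $\cos\theta_j=0$, so the cross term $\delta m_x\sum_{j\in\mathcal{A}}\cos\theta_j\sin\theta_j$ still vanishes and your local use of the equation is valid.
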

\begin{proof}
    Under the assumptions, it follows from the proof of proposition \ref{prop:m_xformulation} that $m_y'=0$ implies $m_y=0$. Next we compute $m_x'$. Adopting the same notations as before, looking for a solution $m_x>0$, we find
    \[m_x'=-m_x+\delta m_x\sum_{j\in \mathcal{A}}\cos^2\theta_j+V_0.\]
    Setting $m_x'=0$, we obtain,
    \[m_x=\frac{V_0}{1-\delta\sum_{i\in \mathcal{A}} \cos^2 \theta_i}.\]
    Choosing $\delta$ small enough ensures $m_x>0$ and $\theta_j \in \{\frac{\pi}{2},\frac{\pi}{2}\}\Leftrightarrow j\in \mathcal{A}$. The local stablity follows from the positive invariance of the spaces $\mathcal{A}_j, j\in \mathcal{A}$ and  $\mathcal{A}^c_j, j\notin \mathcal{A}$ if $m_y$ close enough to zero.
     
\end{proof}
The next section is devoted to numerical simulations.
\section{Numerical Simulations}
In this section, we present numerical . We begin with the case 
$N=2$, and then consider a higher-dimensional setting with $N=50$. The simulations were implemented in both $C^{++}$ and Python; the figures shown here are generated using the Python implementation.
\subsection{Numerical simulations for $N=2$}
In this subsection, we consider equation \eqref{eq:EDS2d}. For the numerical simulations, we take 
\[\delta = 0.2,\,\sigma_1 = \sigma_2=0.1,\, \lambda = 0.1, \,c = 1.
\] 
These values were selected to distinguish between small continuous fluctuations in the fly’s motion and larger changes in direction. In particular, $\delta=0.2<\tfrac12$ places the deterministic dynamics in the weak-coupling regime, where each fixed cue position is associated with a unique globally asymptotically stable winner equilibrium. With $c=1$, the corresponding equilibria are $E_1=(1.25,0)$ and $E_2=(0,1.25)$. The relatively small noise amplitudes $\sigma_1=\sigma_2=0.1$ generate moderate Brownian fluctuations in the trajectories and around the equilibrium selected by the current cue, rather than overwhelming the cue-driven response. Meanwhile, $\lambda=0.1$ gives a mean residence time $1/\lambda=10$ in each cue state, which is substantially longer than the deterministic relaxation time. Hence, after each cue switch, the system generally has sufficient time to approach the vicinity of the corresponding stable equilibrium before the next switch occurs. Biologically, the Brownian terms represent small ongoing fluctuations during flight, whereas switches of $I_t$ represent less frequent and more pronounced changes in the directional cue.
These parameter values are not intended as direct experimental estimates; rather, they are chosen to illustrate the qualitative stochastic dynamics in a regime where cue tracking, local fluctuations, and directional switching occur on clearly separated scales.
The numerical results are summarized in Figure \ref{fig:N2}. The top-left panel shows a realization of the Markov switching process $I(t)$. The top-right panel displays the trajectories of $r_1(t)$ and $r_2(t)$. The bottom-left panel illustrates the corresponding phase-space trajectory. Finally, the bottom-right panel shows the empirical asymptotic distribution of $r_1$. It illustrates that, when $I_t=1$, the activity $r_1$ increases toward the corresponding winner equilibrium while $r_2$ decreases; when $I_t=2$, their roles are reversed. We can also observe the small, continuous fluctuations around this cue-driven behavior without obscuring the alternating dominance of the two populations. The empirical asymptotic distribution of $r_1$ exhibits a bimodal structure, with two peaks concentrated near $0$ and $c$ respectively. Overall, these dynamics capture well the behavior of the system, driven by external cues and intrinsic noise, and are consistent with the expected biological response of the fly. Simulations with other noise intensities were also performed, but they did not reproduce this intended balance between local stochastic fluctuations and reliable cue tracking; these additional regimes will be presented for comparison in the associated PhD thesis of the first author. 
\begin{figure}[t]
    \centering
     \includegraphics[width=5cm,height=3cm]{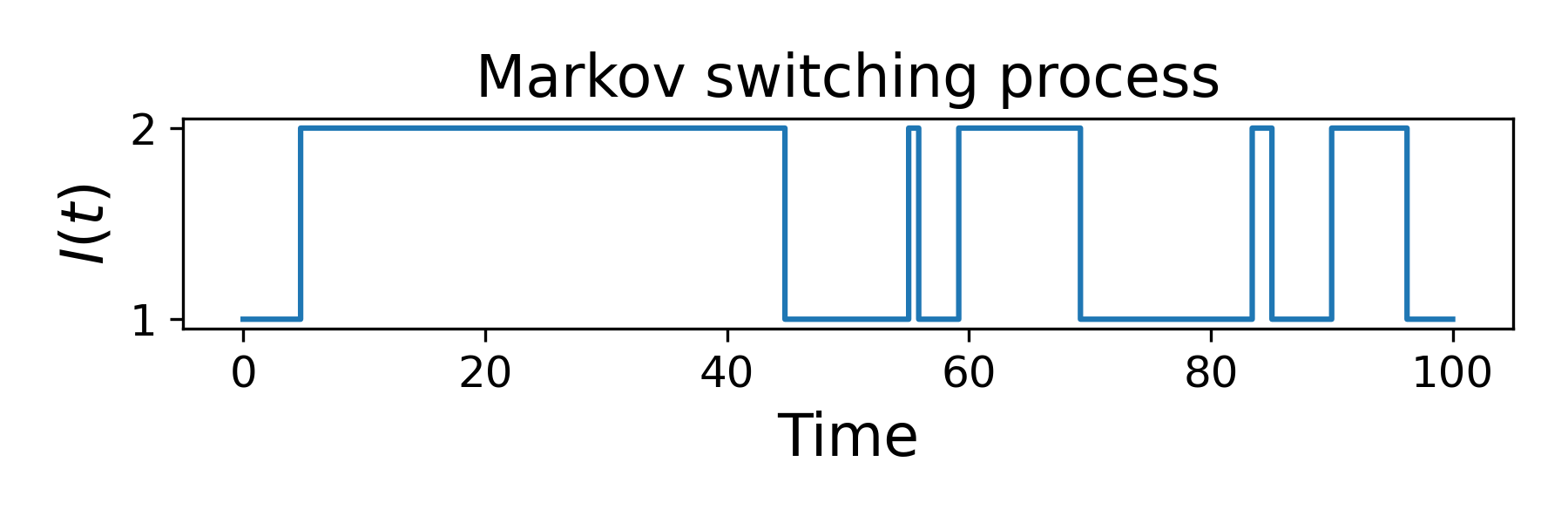}
        \includegraphics[width=5cm]{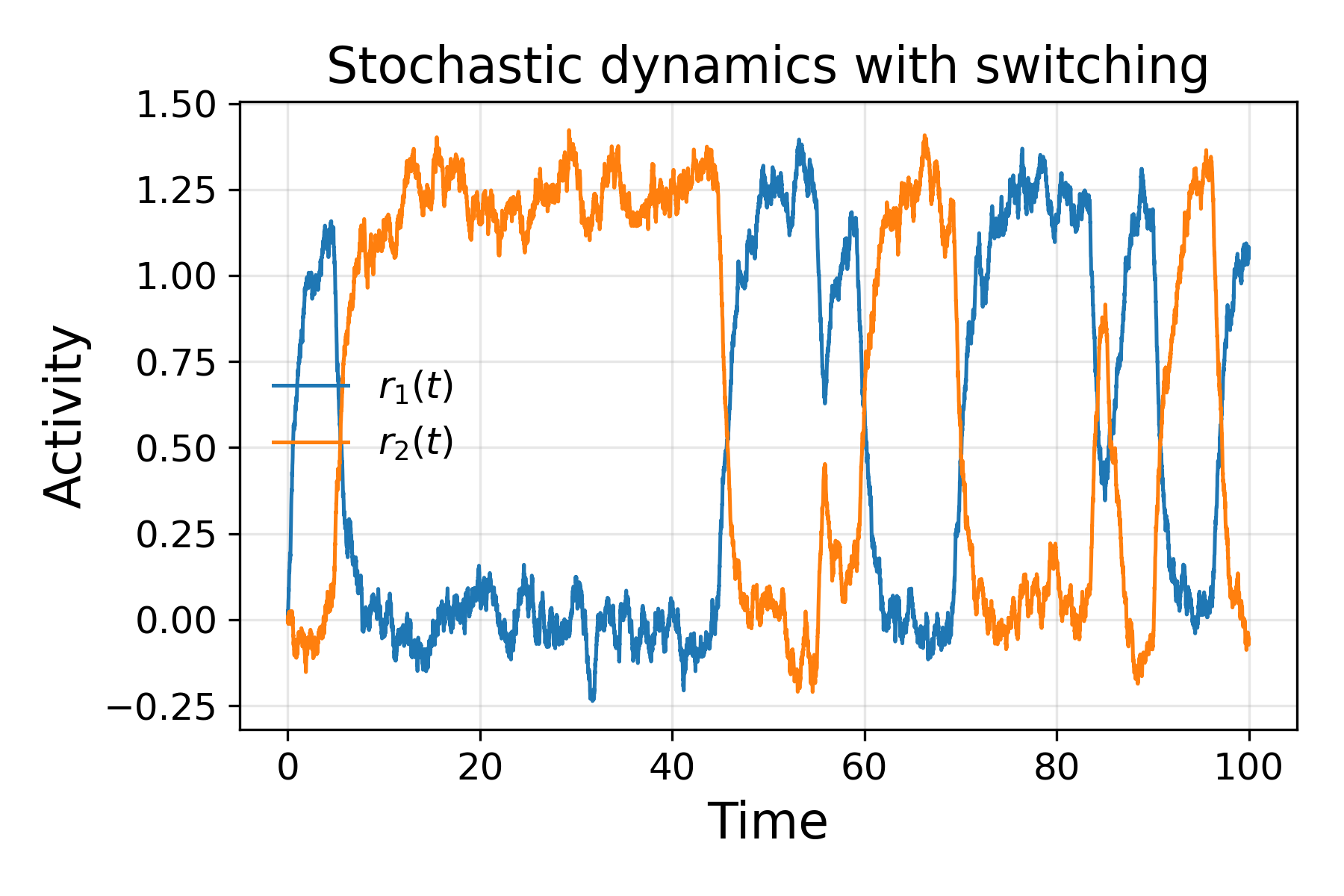}\\
        \includegraphics[width=5cm,height=4cm]{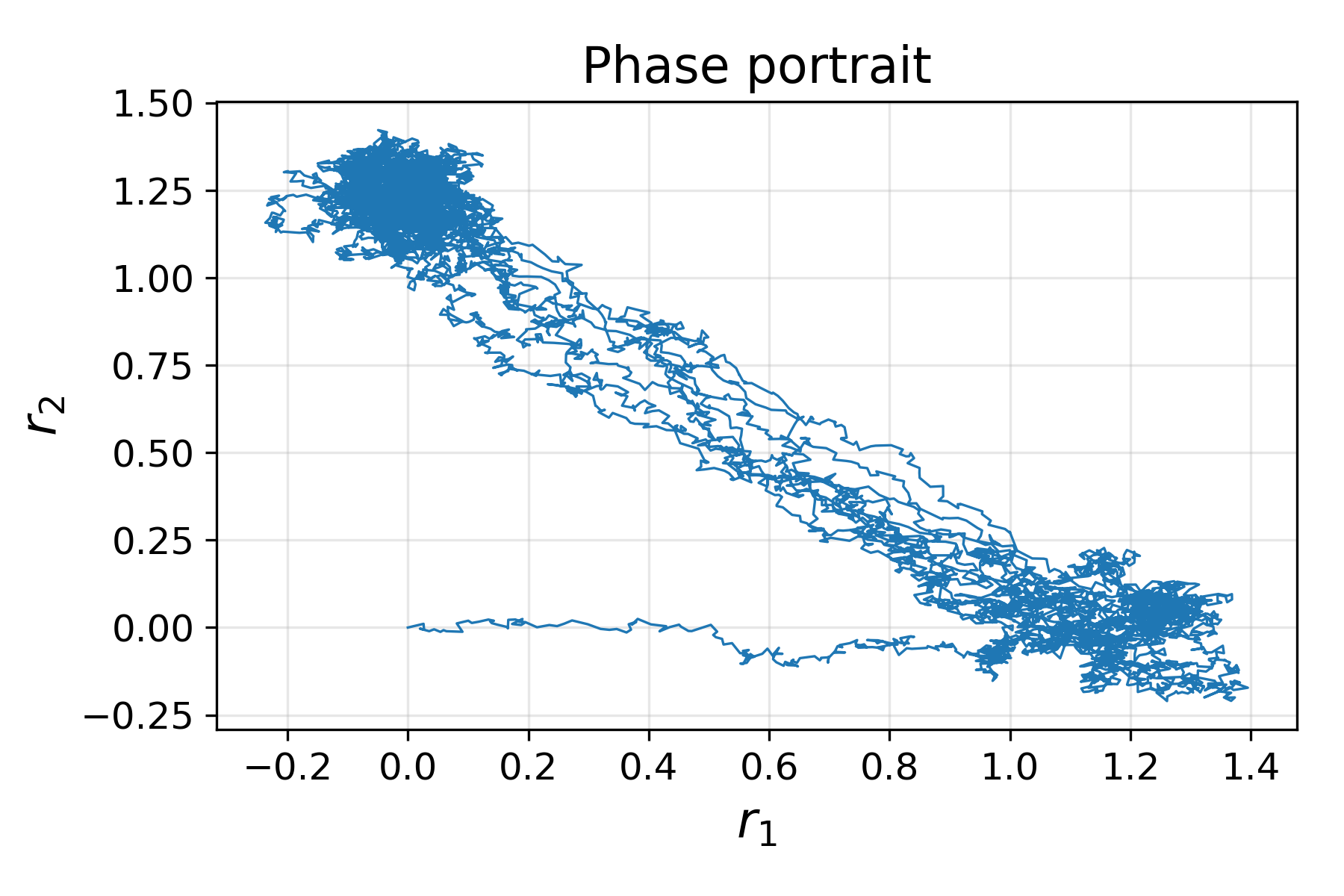}
        \includegraphics[width=5cm,height=4cm]{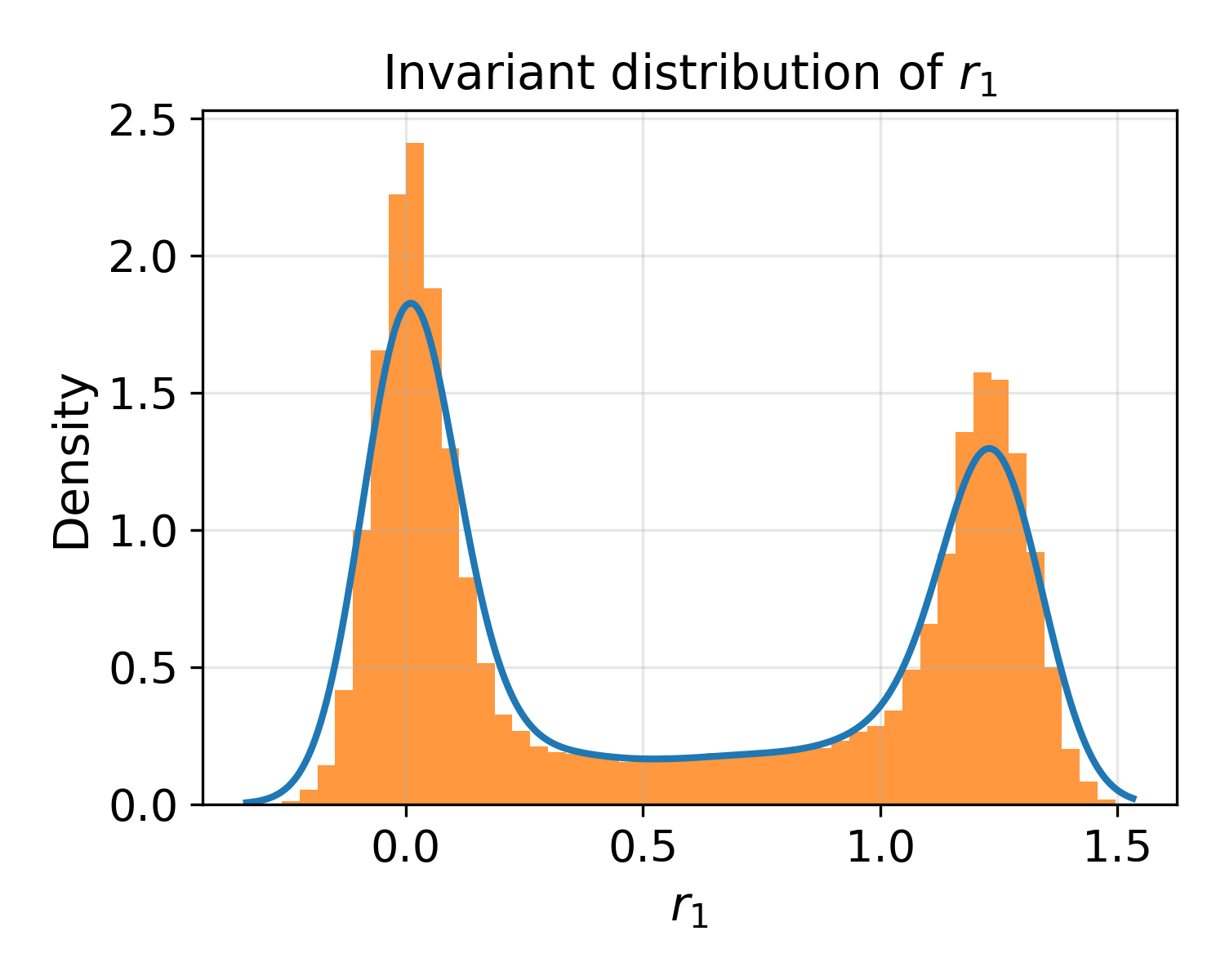}
 \caption{Simulations of equation \eqref{eq:EDS2d} with parameters $\delta = 0.2$, $\sigma_1 = \sigma_2 = 0.1$, $\lambda = 0.1$, and $c = 1$. The top-left panel shows a realization of the Markov switching process $I(t)$. The top-right panel displays the trajectories of $r_1(t)$ and $r_2(t)$. The bottom-left panel shows the corresponding phase-space trajectory. The bottom-right panel illustrates the empirical asymptotic distribution of $r_1$. We observe that, when $I_t=1$, the activity $r_1$ increases toward the corresponding winner equilibrium while $r_2$ decreases; when $I_t=2$, their roles are reversed. We can also observe the small, continuous fluctuations produced by the Brownian motion around this cue-driven behavior without obscuring the alternating dominance of the two populations.The dynamics exhibit a symmetric switching behavior: neural activity is high for the neuron aligned with the current cue and low for the other, with roles reversing after each switch. The empirical distribution of $r_1$ is bimodal, with peaks near $0$ and $c$. In this realization, a slightly higher mass is observed near $0$ for $r_1$ (and correspondingly near $c$ for $r_2$).}
    \label{fig:N2}
\end{figure}

\subsection{Numerical simulations for $N=50$}
In this subsection, we first consider the deterministic network \eqref{eq:linReLu} and then the stochastic extension \eqref{eq:sdeN} for $N=50$.  This choice of $N$ is biologically motivated by experimental counts of approximately 53–68 E-PG neurons in Drosophila \cite{giraldo2018}. Each neuron is assigned a preferred angle $\theta_i=2\pi i/N$, so the network provides a discrete neuronal representation of the angular domain. Figure \ref{fig:Nd_sim} summarizes the main numerical observations under different initial conditions in the deterministic case. In Panel (A), we show representative time courses for selected indices $i \in \{0,1,49\}$, starting from a strongly asymmetric initial condition. Despite this imbalance, the system converges toward the stationary profile $\bar{r}$ identified in Theorem \ref{th:steadystate}, with the expected spatial structure $\bar{r}_i = m_{\bar{x}} \cos\left(\frac{2\pi i}{50}\right)$. Panel (B) depicts the asymptotic profile obtained from random initial conditions in $(0,1)$, illustrating convergence toward the same stationary state and its characteristic spatial structure. Finally, Panel (C) presents the full spatio-temporal evolution via a heat map, clearly showing the relaxation toward equilibrium.
Finally, figure \ref{fig:Nsto} illustrates the simulation of \eqref{eq:sdeN} with the following parameters:
\[\delta = 0.05,\,\sigma_i=0.1,\, \lambda = 0.1, \,c = 1.
\] 
The left panel shows the angle $\theta_i$ corresponding to $I_t=i+1\in\{1,...,N$\} while the right panel presents the full spatio-temporal evolution as a time dependent heat map.
Overall, the simulations accurately capture the system’s behavior under the combined effect of external cues and intrinsic noise, and are consistent with the expected biological response of the fly across all directions.

\begin{figure}[t]
    \centering
    \begin{subfigure}[t]{0.3\textwidth}
        \centering
        \includegraphics[width=4cm,height=3cm]{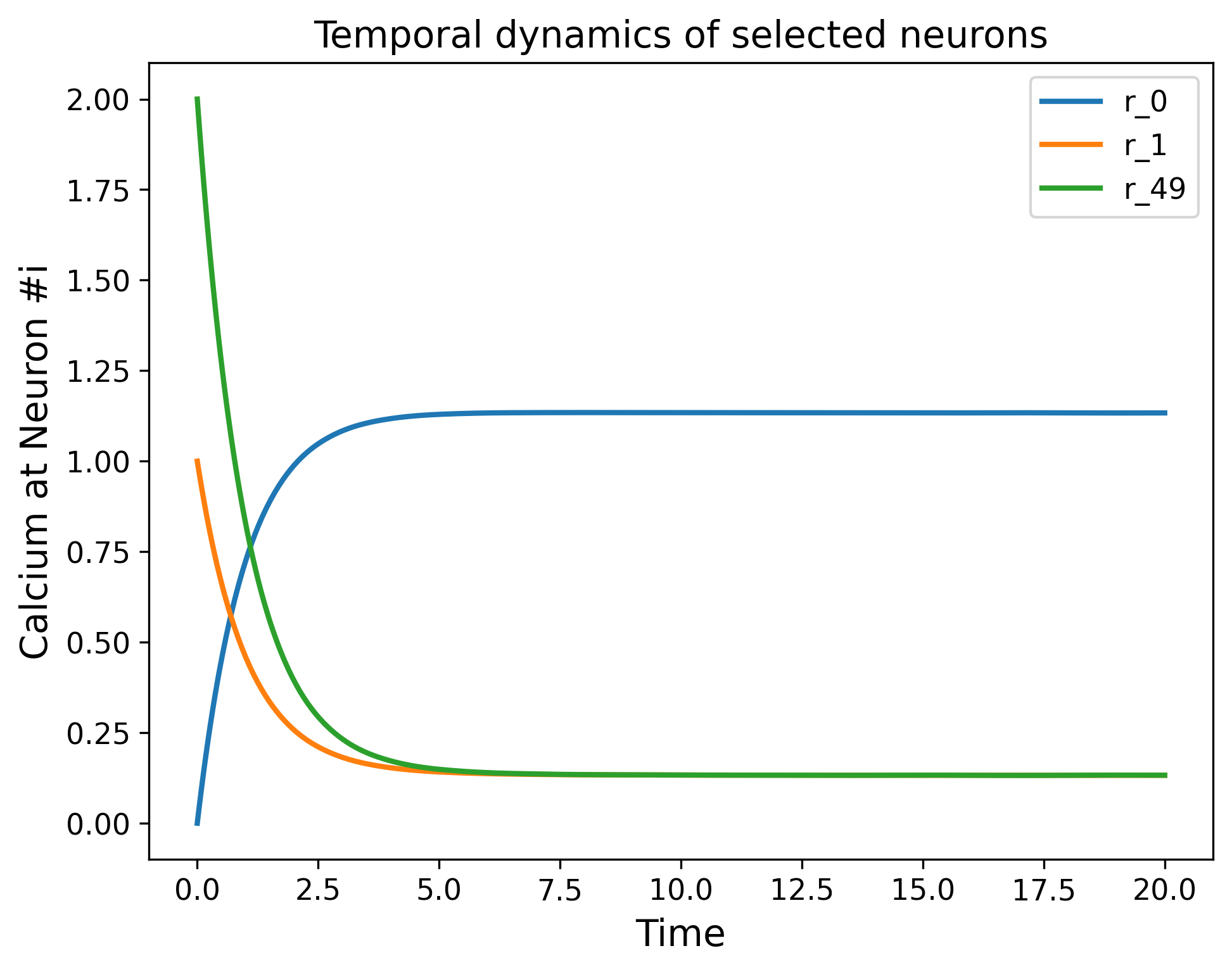}
        \caption{Time course for $i\in\{0,1,49\}$. The following  dyssimetric initial condition was considered  $r_1(0)=1, r_{49}(0)=2$, $r_j(0)=0$ for $j\neq 0$. Asymptotically the trajectory converges toward the stationary solution $\bar{r}$ from theorem \ref{th:steadystate}, with in particular   $\bar{r}_1=\bar{r}_{49}=m_{\bar{x}}\cos\frac{2\pi}{50}$. }
        \label{fig:sub1}
    \end{subfigure}
    \hfill
    \begin{subfigure}[t]{0.3\textwidth}
        \centering
        \includegraphics[width=4cm,height=3cm]{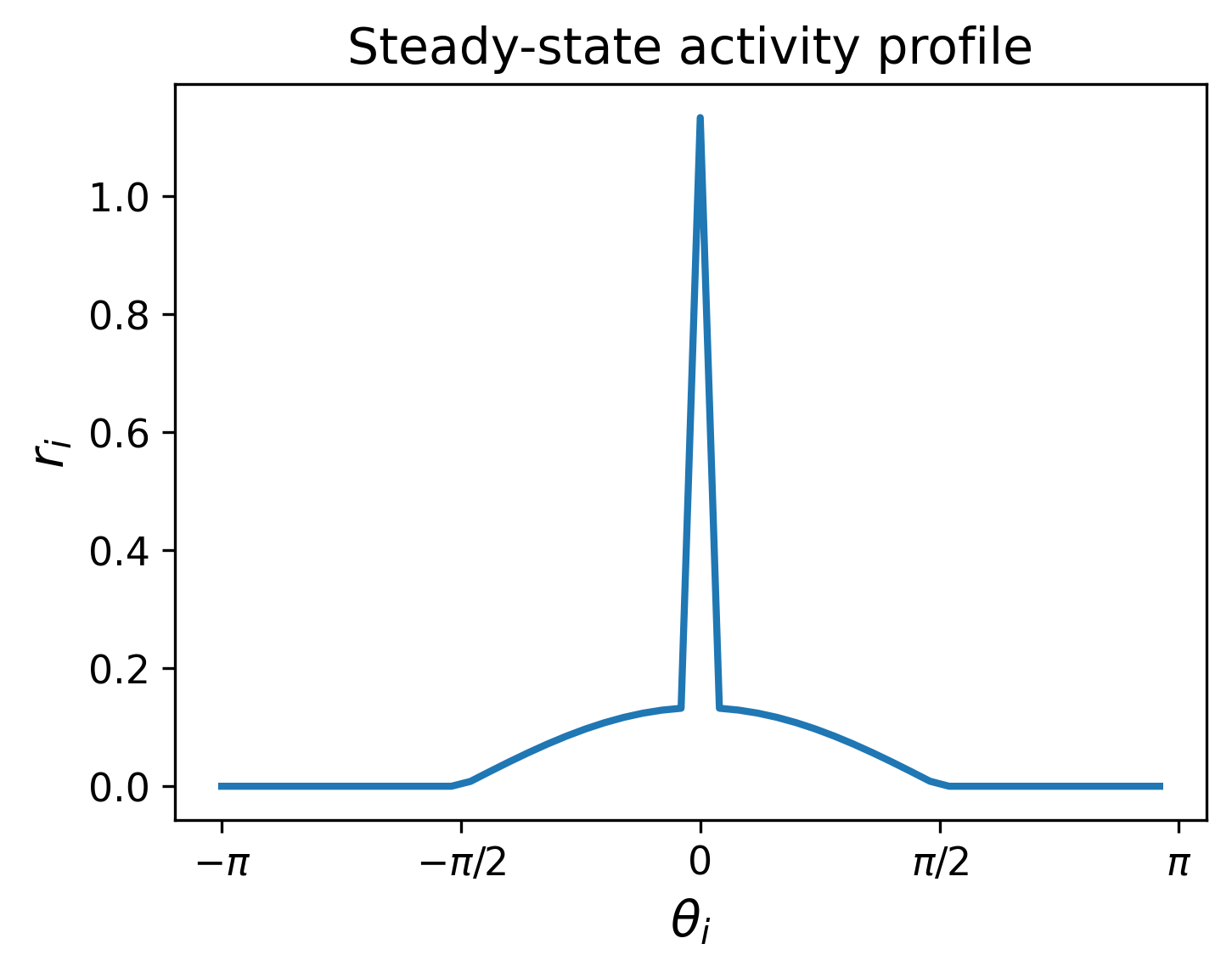}
        \caption{Asymptotic stationary solution with a bump at $0$. Initial conditions were chosen randomly in $(0,1)$. We observe that the solution converges toward the stationary distribution identified in Theorem \ref{th:steadystate}. }
        \label{fig:sub2}
    \end{subfigure}
    \hfill
    \begin{subfigure}[t]{0.3\textwidth}
        \centering
        \includegraphics[width=4cm,height=3cm]{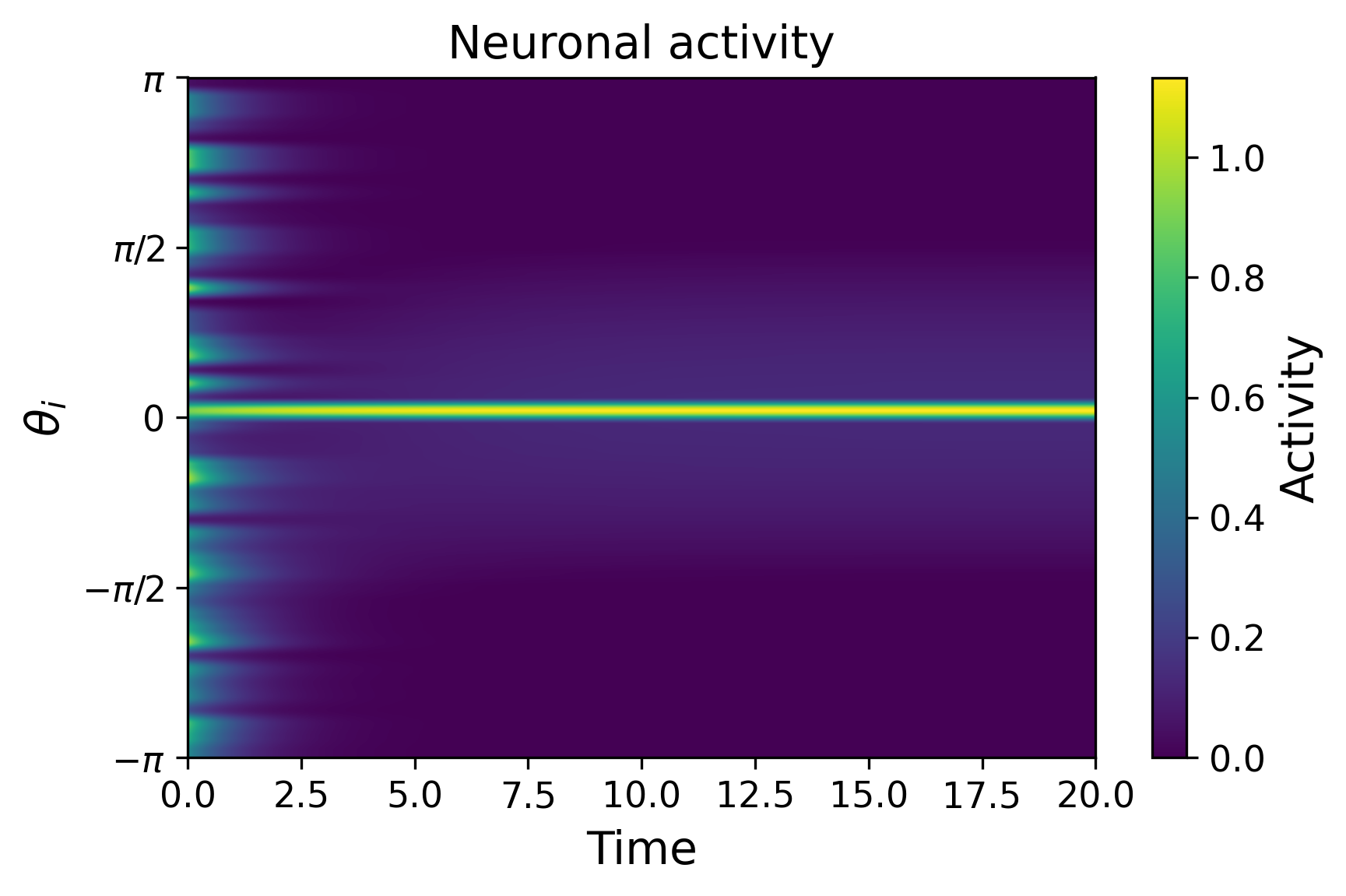}
        \caption{Same solution as in Panel B, but including time and transient trajectories represented as a heat map. Starting from random initial conditions in $(0,1)$, we observe convergence toward the stationary distribution identified in theorem \ref{th:steadystate}.}
        \label{fig:sub3}
    \end{subfigure}

    \caption{
        Simulation of equation \eqref{eq:linReLu} with $N=50$, $\delta=0.05$ and different initial conditions. We observe that the stationary solution described in theorem \ref{th:steadystate} attracts these initial conditions.
    }
    \label{fig:Nd_sim}
\end{figure}
\begin{figure}[t]
    \centering
     \includegraphics[width=12cm]{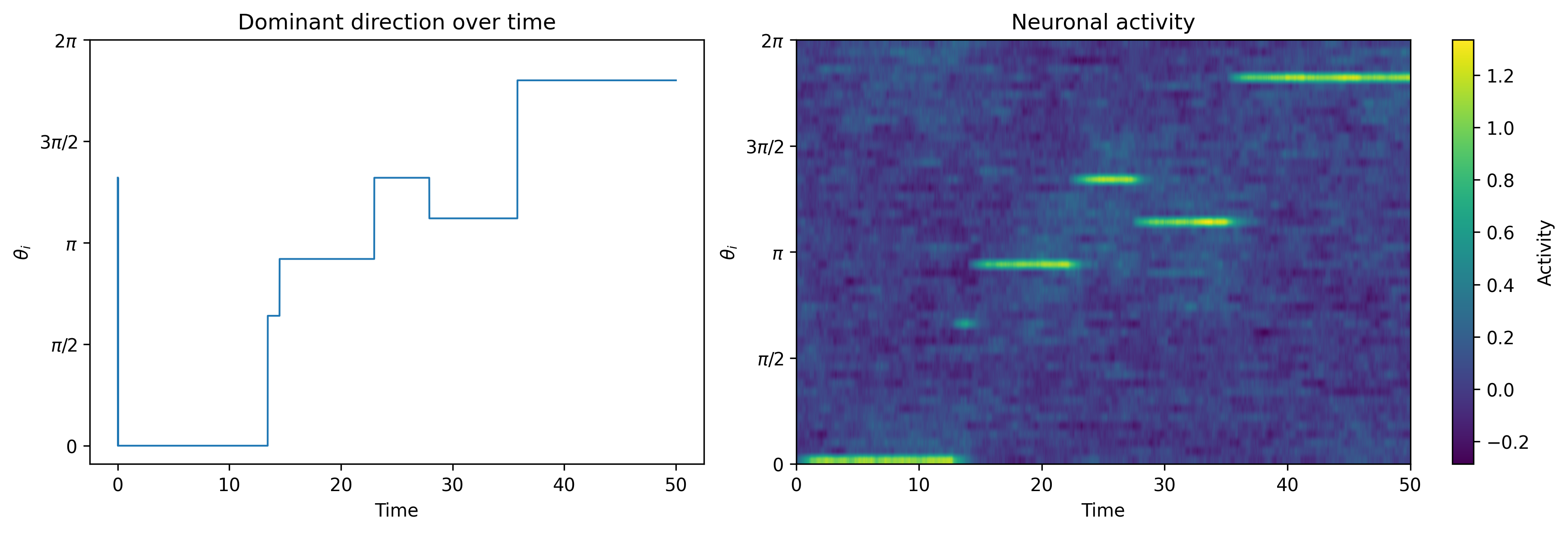}

       \caption{
Simulation of the network \eqref{eq:sdeN} for $N=50$ and with $\delta = 0.05,\,\sigma_i=0.1,\, \lambda = 0.1, \,c = 1$. The left panel depicts the angle $\theta_i$ associated with $I_t = i+1 \in \{1,\ldots,N\}$, while the right panel displays the full spatio-temporal evolution in the form of a time-dependent heat map. Overall, the simulations faithfully reproduce the system’s behavior under the joint influence of external cues and intrinsic noise, and are consistent with the expected biological response of the fly across all directions.
}
    \label{fig:Nsto}
\end{figure}

\section{Conclusion}

In this article, we have introduced and analyzed stochastic and deterministic neural field models inspired by the ellipsoid body's architecture of the Drosophila central complex. Starting from a biologically motivated connectivity structure, we derived a reduced tractable model capable of sustaining localized activity bumps and encoding angular information.
We provided a mathematical analysis of both the deterministic and stochastic dynamics. In particular, we established well-posedness, characterized the generator of the associated switching diffusion, and proved the existence of a stationary regime. In the deterministic setting, we identified parameter regimes ensuring global convergence toward a stable bump solution. In the stochastic case, we showed that the interplay between noise and external cues preserves the qualitative structure of the dynamics while inducing realistic switching and variability in neural activity. From a biological perspective, these results contribute to the ongoing effort to understand how low-dimensional population codes emerge from recurrent neural circuits in the insect brain. The Drosophila central complex is currently an active area of experimental and theoretical research, and increasingly detailed connectomic data continue to refine our understanding of its functional architecture. Our work provides a mathematically grounded framework that can be extended to incorporate more realistic connectivity patterns and sensory inputs, and may serve as a basis for future studies linking circuit-level mechanisms to navigation and decision-making in biological systems.

\medskip
Received xxxx 20xx; revised xxxx 20xx; early access xxxx 20xx.
\medskip

\end{document}